\newtheorem{theorem}{Theorem}[section]
\newtheorem{lemma}[theorem]{Lemma}
\newtheorem{corollary}[theorem]{Corollary}
\theoremstyle{definition}
\newtheorem{remark}[theorem]{Remark}
\def\UU{{\mathcal U}}
\def\FF{{\mathcal F}}
\begin{document}

%\linenumbers

\begin{center}
{\Large \bf     Finite groups with some subgroups satisfying the partial $ \Pi  $-property

\renewcommand{\thefootnote}{\fnsymbol{footnote}}

\footnotetext[1]
{Corresponding author.}

}\end{center}

                        \vskip0.6cm
\begin{center}

                       Zhengtian Qiu, Guiyun Chen and Jianjun Liu$^{\ast}$

                            \vskip0.5cm

       School of Mathematics and Statistics, Southwest University,

       Chongqing 400715, P. R. China

E-mail addresses:  qztqzt506@163.com \, \ gychen1963@163.com    \, \  liujj198123@163.com

\end{center}

                          \vskip0.5cm

\begin{abstract}
	Let $ H $ be a subgroup of a finite group $ G $.  We say that $ H $ satisfies the partial $ \Pi  $-property in $ G $ if  there exists a chief series $ \varGamma_{G}: 1 =G_{0} < G_{1} < \cdot\cdot\cdot < G_{n}= G $ of $ G $ such that for every $ G $-chief factor $ G_{i}/G_{i-1} $ $(1\leq i\leq n) $ of $ \varGamma_{G} $, $ | G / G_{i-1} : N _{G/G_{i-1}} (HG_{i-1}/G_{i-1}\cap G_{i}/G_{i-1})| $ is a $ \pi (HG_{i-1}/G_{i-1}\cap G_{i}/G_{i-1}) $-number.  In this paper, we investigate how some subgroups satisfying the partial $\Pi$-property influence the structure of finite groups.
\end{abstract}

{\hspace{0.88cm} \small \textbf{Keywords:} Finite group, $ p $-nilpotent group, the partial  $ \Pi $-property, saturated formation.}
	
\vskip0.1in

{\hspace{0.88cm} \small \textbf{Mathematics Subject Classification (2020):} 20D10,   20D20.}

%\vskip1cm
\section{Introduction}

    \hspace{0.5cm} All groups considered in this paper are finite.
    We use conventional notions as in \cite{Huppert-1967}. Throughout the paper, $ G $ always denotes a finite group, $ p $ denotes a fixed prime, $ \pi $ denotes some set of primes  and $ \pi(G) $ denotes the set of all primes dividing $ |G| $. An integer $ n $ is called a $ \pi $-number if all prime divisors of $ n $ belong to $ \pi $.

    Recall that a class $ \FF $  of groups is called a formation if $ \FF $ is closed under taking homomorphic images and subdirect products.  A formation $ \FF $ is said to be saturated if $ G/\Phi(G) \in \FF $  implies that $ G \in \FF $.  Throughout  this paper, we  use $ \UU $ (resp. $ \UU_{p} $) to denote the class of supersoluble (resp. $ p $-supersoluble) groups. The supersoluble hypercenter $ Z_{\UU}(G) $  is the the largest normal subgroup of $ G $ such that every $ G $-chief factor below $ Z_{\UU}(G) $ is cyclic,  and the $ p $-supersoluble hypercenter  $ Z_{\UU_{p}}(G) $ is  the largest normal subgroup of $ G $ such that every $ p $-$ G $-chief factor below $ Z_{\UU_{p}}(G) $ is cyclic of order $ p $.

    %Let $ \FF $ be a formation. The $ \FF $-residual of $ G $, denoted by $ G^{\FF} $, is the smallest normal subgroup of $ G $ with quotient in $ \FF $. A chief factor $ L/K $ of $ G $ is said to be $ \FF $-central  (resp. $ \FF $-eccentric) in $ G $ if $ L/K \rtimes G/C_{G}(L/K) \in \FF $ (resp. $ L/K \rtimes G/C_{G}(L/K) \not \in \FF $). In particular, a chief factor $ L/K $ of  $ G $ is $ \UU $-central in $ G $  if and only if $ L/K $ is cyclic.  A normal subgroup $ N $ of $ G $ is called $ \FF $-hypercentral in $ G $ if either $ N = 1 $ or every $ G $-chief factor below $ N $ is  $ \FF $-central in $ G $. Let $ Z_{\FF}(G) $ denotes the $ \FF $-hypercenter of $ G $, that is, the product of all $ \FF $-hypercentral normal subgroups of $ G $.
    %In particular, a chief factor $ L/K $ of  $ G $ is $ \UU $-central in $ G $  if and only if $ L/K $ is cyclic.

    In \cite{Chen-2013}, Chen and Guo introduced the concept of the partial  $ \Pi $-property of subgroups of finite groups,  which generalizes a large number of known embedding
    properties (see \cite[Section 7]{Chen-2013}).
    A subgroup $H$ of a group $G$ is said to satisfy the partial $ \Pi$-property in $G$  if  there exists a chief series $ \varGamma_{G}: 1 =G_{0} < G_{1} < \cdot\cdot\cdot < G_{n}= G $ of $G$ such that for every $ G $-chief factor $ G_{i}/G_{i-1} $ $ (1\leq i\leq n) $ of $ \varGamma_{G} $, $ | G / G_{i-1} : N _{G/G_{i-1}} (HG_{i-1}/G_{i-1}\cap G_{i}/G_{i-1})| $ is a $ \pi (HG_{i-1}/G_{i-1}\cap G_{i}/G_{i-1}) $-number. They proved the following results by assuming  some maximal  subgroups  satisfiy the partial $ \Pi $-property.

    \begin{theorem}[{\cite[Proposition 1.3]{Chen-2013}}]\label{max}
    	Suppose that $ P $ is a normal $ p $-subgroup of $ G $. If every maximal
    	subgroup of $ P $ satisfies the partial $ \Pi $-property in $ G $, then $ P \leq Z_{\UU}(G) $.
    \end{theorem}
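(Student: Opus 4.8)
The plan is to argue by induction on $|G|$, passing to a minimal counterexample, and to show that every $G$-chief factor inside $P$ is cyclic, which is equivalent to $P\le Z_{\UU}(G)$. The engine of the induction is the behaviour of the partial $\Pi$-property under quotients, so I would first record the essential lemmas: if $H$ satisfies the partial $\Pi$-property in $G$ and $R\trianglelefteq G$, then $HR/R$ satisfies the partial $\Pi$-property in $G/R$ (and in particular $H/R$ does when $R\le H$), and the maximal subgroups of $PR/R\cong P$ are exactly the images of the maximal subgroups of $P$. Granting these, whenever $R$ is a nontrivial normal subgroup the hypotheses are inherited by $G/R$ with the normal $p$-subgroup $PR/R$, so the induction hypothesis applies to $G/R$.

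Using this I would carry out two reductions. First, if $\Phi(P)\ne 1$ then the induction hypothesis applied to $G/\Phi(P)$ gives $P/\Phi(P)\le Z_{\UU}(G/\Phi(P))$; since a complete $G$-invariant flag of $P/\Phi(P)$ with factors of order $p$ induces, through the commutator and $p$-power maps, such a flag of $P$, this lifts to $P\le Z_{\UU}(G)$, a contradiction. Hence $\Phi(P)=1$ and $P$ is elementary abelian. Second, let $N$ be a minimal normal subgroup of $G$ with $N\le P$; the induction hypothesis applied to $G/N$ gives $P/N\le Z_{\UU}(G/N)$, so if $N$ were cyclic then $P\le Z_{\UU}(G)$, a contradiction. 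Thus $N$ is non-cyclic, say $|N|=p^{d}$ with $d\ge 2$. Moreover, if $G$ had a second minimal normal subgroup $R\ne N$, then factoring it out, $NR/R\cong N$ would lie in $Z_{\UU}(G/R)$ and hence be cyclic, which is impossible. So $N$ is the \emph{unique} minimal normal subgroup of $G$; in particular $N=G_{1}$ in every chief series of $G$, which is the whole point of this reduction.

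Now comes the core. Because $N=G_{1}$, the partial $\Pi$-property of a maximal subgroup $M$ of $P$, applied to the bottom factor of its chief series, reads: $|G:N_{G}(M\cap N)|$ is a $\pi(M\cap N)$-number. Since $P$ is elementary abelian with $N\le P$, every hyperplane $W$ of the $\mathbb{F}_{p}$-space $N$ has the form $W=M\cap N$ for some maximal subgroup $M$ of $P$ (extend $W$ to a hyperplane of $P$ missing a chosen vector of $N\setminus W$); as $d\ge 2$ we have $W\ne 1$, so $\pi(W)=\{p\}$ and $|G:N_{G}(W)|$ is a power of $p$. Intersecting over all hyperplanes, $D:=\bigcap_{W}N_{G}(W)$ is normal in $G$, has $p$-power index, and consists precisely of the elements acting as scalars on $N$. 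Consequently $\overline{G}:=G/C_{G}(N)\le \mathrm{GL}(N)$ has a central $p'$-subgroup (its scalars) of $p$-power index, hence is nilpotent with a normal Sylow $p$-subgroup $\overline{P}_{0}$; the fixed space $C_{N}(\overline{P}_{0})$ is nonzero and $\overline{G}$-invariant, so equals $N$ by irreducibility, forcing $\overline{P}_{0}=1$. Then $\overline{G}$ acts on $N$ by scalars only, which is irreducible only when $d=1$, the desired contradiction.

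The genuinely delicate part is not the final linear-algebra step but the reductions that make it available. The definition only guarantees \emph{some} chief series, over which we have no control, so a priori $N$ appears as $G_{i}/G_{i-1}$ with $G_{i-1}\ne 1$, and then $M\cap N$ gets replaced by the larger, uninformative subgroup $N\cap MG_{i-1}$; reducing to $N$ being the unique minimal normal subgroup of $G$ (so $G_{1}=N$) is exactly what strips off this lower term. I therefore expect the main obstacle to be establishing the quotient lemmas for the partial $\Pi$-property, and in particular the trivial-intersection version needed to factor out a second minimal normal subgroup, together with the verification that supersoluble hypercentrality lifts across the Frattini quotient.
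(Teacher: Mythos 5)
This theorem is imported from Chen--Guo (\cite[Proposition 1.3]{Chen-2013}); the paper itself contains no proof, so your attempt can only be judged on its own merits. Your overall architecture (minimal counterexample, reduction to $\Phi(P)=1$ and to a non-cyclic minimal normal subgroup $N\leq P$, then the hyperplane/scalar argument on $N$) is sound, and the final linear-algebra step is correct as written. The Frattini lifting ($P/\Phi(P)\leq Z_{\UU}(G/\Phi(P))$ implies $P\leq Z_{\UU}(G)$) is a standard lemma in this literature and is acceptable with a citation. The genuine gap is exactly where you predicted it: the reduction to $N$ being the \emph{unique minimal normal subgroup of $G$}. To factor out a second minimal normal subgroup $R$ you must transfer the partial $\Pi$-property of a maximal subgroup $M$ of $P$ to $MR/R$ in $G/R$, and the only available transfer result (Lemma~\ref{over}, i.e.\ Chen--Guo's Lemma 2.1(3)) requires $R\leq M$ or $(|M|,|R|)=1$. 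If $R$ is a minimal normal subgroup with $p\mid |R|$ and $R\not\leq P$ (so $R\cap P=1$; $R$ could be an elementary abelian $p$-group outside $P$, or a non-abelian minimal normal subgroup of order divisible by $p$), neither hypothesis holds, and the transfer is not known to be valid in general --- that is precisely why Chen--Guo impose those hypotheses. So your Step C, as stated, is unjustified, and without it you cannot assume $G_{1}=N$ in the chief series attached to $M$, which is the input your hyperplane argument needs.

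The gap is repairable, but by a different reduction. Using only the $R\leq M$ case of Lemma~\ref{over} you can legitimately reduce to: $N$ is the unique minimal normal subgroup of $G$ \emph{contained in $P$} (if $R\leq P$ is another one, induction on $G/R$ makes $NR/R\cong N$ a cyclic chief factor, a contradiction). Then, for a maximal subgroup $M$ of $P$ with witnessing chief series $1=G_{0}<G_{1}<\cdots<G_{n}=G$, let $j$ be minimal with $N\leq G_{j}$, so $G_{j}=NG_{j-1}$ and $N\cap G_{j-1}=1$. Uniqueness of $N$ inside $P$ forces $G_{j-1}\cap P=1$, hence $[G_{j-1},P]\leq G_{j-1}\cap P=1$; consequently $MG_{j-1}\cap N=M\cap N$ and $MG_{j-1}\cap G_{j}=(M\cap N)G_{j-1}$. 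Since $N_{G}\bigl((M\cap N)G_{j-1}\bigr)\leq N_{G}\bigl((M\cap N)G_{j-1}\cap N\bigr)=N_{G}(M\cap N)$, the defining condition at level $j$ yields that $|G:N_{G}(M\cap N)|$ is a $p$-number, which is what your hyperplane argument requires. With this substitution your proof goes through; as submitted, however, the uniqueness claim rests on an unproved (and dubious) quotient lemma.
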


    \begin{theorem}[{\cite[Proposition 1.4]{Chen-2013}}]\label{maximal}
     Let $ E $ be a normal subgroup of $ G $ and let $ P $ be a Sylow $ p $-subgroup of $ E $. If every maximal subgroup of $ P $ satisfies the partial $ \Pi $-property in $ G $, then either $ E\leq Z_{\UU_{p}}(G) $ or $ |E|_{p}=p $.
    \end{theorem}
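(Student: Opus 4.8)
The plan is to argue by induction on $|G|$, fixing a counterexample $(G,E,P)$ with $|G|$ minimal; thus $|E|_{p}>p$ and $E\not\le Z_{\UU_{p}}(G)$. I would first record the (standard) behaviour of the partial $\Pi$-property under quotients: if $K\trianglelefteq G$ then the images $P_{1}K/K$ of the maximal subgroups $P_{1}$ of $P$ still satisfy the partial $\Pi$-property in $G/K$, so the hypotheses descend to every quotient in which the relevant Sylow subgroup is an epimorphic image of $P$. Using this I would reduce to the case $O_{p'}(E)=1$: if $T=O_{p'}(E)\neq 1$, then $T\trianglelefteq G$, the triple $(G/T,\,E/T,\,PT/T)$ satisfies the hypotheses with $|E/T|_{p}=|E|_{p}>p$, and minimality gives $E/T\le Z_{\UU_{p}}(G/T)$. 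Since $T$ is a $p'$-group one has $Z_{\UU_{p}}(G/T)=Z_{\UU_{p}}(G)/T$, whence $E\le Z_{\UU_{p}}(G)$, a contradiction.

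Next I would choose a minimal normal subgroup $N$ of $G$ with $N\le E$; as $O_{p'}(E)=1$, necessarily $p\mid|N|$. If $N$ is non-abelian, or if $N$ is cyclic of order $p$, then $N$ contributes no non-cyclic $p$-$G$-chief factor below $E$. Passing to $G/N$ (the hypotheses are again inherited and $|G/N|<|G|$), minimality yields $E/N\le Z_{\UU_{p}}(G/N)$: in the alternative branch $|E/N|_{p}=p$ this containment still holds, since then every $p$-group $G/N$-chief factor below $E/N$ has order dividing $p$ and so is cyclic of order $p$. Pulling back, every $p$-$G$-chief factor below $E$ is cyclic of order $p$ (there is none strictly below $N$, and $N$ itself is either non-abelian or of order $p$), so $E\le Z_{\UU_{p}}(G)$, a contradiction. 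Hence in the counterexample every minimal normal subgroup of $G$ lying in $E$ is elementary abelian of order $>p$; fix one such $N$, and note $N\le O_{p}(E)\le P$.

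The heart of the proof is to rule this out by proving $|N|=p$. The idea is to verify that \emph{every} maximal subgroup of $N$ satisfies the partial $\Pi$-property in $G$ and then apply Theorem~\ref{max} to the normal $p$-subgroup $N$: this gives $N\le Z_{\UU}(G)$, forcing the chief factor $N$ to be cyclic, so $|N|=p$, the desired contradiction. To produce the property for a maximal subgroup $N_{1}$ of $N$ in the case $N\not\le\Phi(P)$, I would pick a maximal subgroup $P_{1}$ of $P$ with $N_{1}\le P_{1}$ and $N\not\le P_{1}$, so that $N_{1}=P_{1}\cap N$; taking a chief series of $G$ that witnesses the partial $\Pi$-property of $P_{1}$ and is arranged to pass through $N$, the defining condition for $P_{1}$ at the chief factor $N$ reads precisely that $|G:N_{G}(N_{1})|$ is a $\pi(N_{1})$-number, while at every chief factor above $N$ the relevant intersection is trivial; the same series therefore witnesses the partial $\Pi$-property of $N_{1}$.

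The main obstacle is twofold. First, the step "arranged to pass through $N$" relies on an insertion/refinement lemma for the partial $\Pi$-property (that a witnessing chief series may be taken through a prescribed minimal normal subgroup), which must be established in the preliminaries. Second, and more seriously, the transfer argument breaks down in the subcase $N\le\Phi(P)$, where no maximal subgroup of $P$ meets $N$ in a hyperplane. Here I expect to argue separately, exploiting that $N\le\Phi(P)$ with $P\in\mathrm{Syl}_{p}(E)$ and $E\trianglelefteq G$ forces a Frattini-type containment of $N$ incompatible with $N$ being a non-central elementary abelian chief factor of order exceeding $p$, thereby either reducing to the already-settled case or contradicting $|N|>p$ directly. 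Pinning down this $\Phi(P)$-subcase cleanly is where the real work lies.
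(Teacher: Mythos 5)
The paper does not prove this statement at all: it is quoted verbatim from \cite[Proposition 1.4]{Chen-2013}, so there is no internal proof to compare against and your argument has to stand on its own. It does not, and the failures go beyond the $\Phi(P)$-obstacle you flag yourself.

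The decisive gap is your treatment of a non-abelian minimal normal subgroup $N\le E$ with $p\mid|N|$. Such an $N$ is itself a non-cyclic $G$-chief factor of order divisible by $p$ lying below $E$, so $E\le Z_{\UU_{p}}(G)$ is outright impossible in that configuration: the $\UU_{p}$-hypercenter is $p$-soluble, since every chief factor below it is either a $p'$-group or cyclic of order $p$. Your assertion that ``$N$ contributes no non-cyclic $p$-$G$-chief factor below $E$'' is therefore false (or else you are using a nonstandard reading of $Z_{\UU_{p}}(G)$ that only constrains chief factors which are $p$-groups, which proves a strictly weaker statement). This branch is precisely where the substance of the theorem lies --- one must show that $|E|_{p}>p$ forces $E$ to be $p$-soluble --- and your outline contains no argument for it. The same confusion recurs in the claim that the alternative $|E/N|_{p}=p$ ``still'' yields $E/N\le Z_{\UU_{p}}(G/N)$: a Sylow $p$-subgroup of order $p$ does not make the $p$-chief factors below $E/N$ cyclic (take $E/N$ a non-abelian simple group with Sylow $p$-subgroup of order $p$); the two alternatives in the statement are genuinely distinct, which is exactly why it is stated as a disjunction. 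Finally, even in the abelian case the core step is unfinished on your own account: the insertion lemma (that a witnessing chief series for $P_{1}$ may be rerouted through $N$) is asserted rather than proved and is not available in the paper; even granting it, Theorem \ref{max} requires \emph{every} maximal subgroup of $N$ to satisfy the partial $\Pi$-property, whereas your construction only reaches hyperplanes of the form $P_{1}\cap N$ with $N\not\le P_{1}$, i.e.\ those $N_{1}$ with $N\not\le N_{1}\Phi(P)$; and the subcase $N\le\Phi(P)$ is conceded to be open. As it stands the proposal does not constitute a proof.
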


    In this paper, we mainly consider the question: if a finite group $ G $ does have some subgroups satisfying the partial $ \Pi $-property, what can we say about $ G $? Our first result is as follows.

    \begin{theorem}\label{first}
    	Let $ p $ be a prime dividing the order of $ G $ with $ (|G|, p-1) = 1 $ and $ P \in \mathrm{Syl}_{p}(G) $.
    	Then $ G $ is $ p $-nilpotent if and only if every maximal subgroup of $ P $ satisfies the partial $ \Pi $-property in $ N_{G}(P) $ and $ P' $ satisfies the partial $ \Pi $-property in $ G $.
    \end{theorem}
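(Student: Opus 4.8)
The plan is to prove the two implications separately, the ``only if'' direction being routine and the converse carrying the real content through a minimal counterexample. For necessity, if $G$ is $p$-nilpotent then so is every subgroup, and since $P \trianglelefteq N_G(P)$ we get $N_G(P) = P \times H$ with $H = O_{p'}(N_G(P))$. Refining $1 \le O_{p'}(G) \le G$ (respectively $1 \le H \le N_G(P)$) to a chief series, every factor below $O_{p'}(G)$ (resp.\ below $H$) is a $p'$-group, while the factors above it lie in $G/O_{p'}(G)\cong P$ (resp.\ $N_G(P)/H\cong P$), a $p$-group, hence are central of order $p$. For $H_0\in\{P',\text{ a maximal subgroup of }P\}$ the image $H_0G_{i-1}/G_{i-1}$ is a $p$-group: it meets a $p'$-factor trivially and meets a central order-$p$ factor either trivially or in the whole normal factor, so in every case the relevant normaliser has index $1$ and the partial $\Pi$-property holds.

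For sufficiency let $G$ be a counterexample of minimal order. I would first check that both hypotheses survive passage to a quotient $G/N$: using $N_{G/N}(PN/N)=N_G(P)N/N$ and the standard fact that images of partial $\Pi$-subgroups again satisfy the partial $\Pi$-property, and noting $(|G/N|,p-1)=1$, minimality forces every proper quotient of $G$ to be $p$-nilpotent. Since $p$-nilpotency is a saturated formation, this yields the usual configuration: $O_{p'}(G)=1$, $G$ has a \emph{unique} minimal normal subgroup $L$ with $L\not\le\Phi(G)$, and $G/L$ is $p$-nilpotent; in particular $L$ is not a $p'$-group. Next I would pin down $N_G(P)$. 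As $P$ is a normal $p$-subgroup of $N_G(P)$ all of whose maximal subgroups satisfy the partial $\Pi$-property there, Theorem~\ref{max} gives $P\le Z_{\UU}(N_G(P))$; because $P$ is moreover a Sylow $p$-subgroup, every $p$-chief factor of $N_G(P)$ is cyclic, so $N_G(P)$ is $p$-supersoluble. Combined with $(|N_G(P)|,p-1)=1$ and the standard lemma that a $p$-supersoluble group of order prime to $p-1$ is $p$-nilpotent (each conjugation action on an order-$p$ factor lands in a cyclic group of order $p-1$, hence is trivial), I conclude $N_G(P)=P\times H$ with $H\le C_G(P)$ a $p'$-group.

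The decisive use of the second hypothesis is the following. Fix a chief series of $G$ witnessing the partial $\Pi$-property of $P'$, refined so that $L$ is one of its factors. Since $P'\trianglelefteq P$ and $L\trianglelefteq G$, the subgroup $P'\cap L$ is normalised by the whole Sylow $p$-subgroup $P$, so $|G:N_G(P'\cap L)|$ is a $p'$-number; but the partial $\Pi$-property forces it to be a $\pi(P'\cap L)$-number, i.e.\ a $p$-number. Hence $N_G(P'\cap L)=G$, so $P'\cap L\trianglelefteq G$ and therefore $P'\cap L\in\{1,L\}$. If $L$ is non-abelian it is a direct product of non-abelian simple groups with $p\mid|L|$ and $O_p(L)=1$, forcing $P'\cap L=1$, so the Sylow $p$-subgroups $P\cap L$ of $L$ are abelian; but Burnside's theorem shows $L$ is not $p$-nilpotent, so $N_L(P\cap L)/C_L(P\cap L)$ is a non-trivial $p'$-group acting on $P\cap L$, which I must contradict against $N_G(P)=P\times H$. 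If $L$ is an elementary abelian $p$-group, then the complementation $G=L\rtimes M$ coming from $L\not\le\Phi(G)$ (with $M\cong G/L$ $p$-nilpotent) rules out $P'\cap L=L$, since $L\le P'\le\Phi(P)$ would force $P=P\cap M$ and hence $L=1$; thus $P'\cap L=1$, whence $[P,L]\le P'\cap L=1$ and $L\le Z(P)$. One then has to prove that the $p'$-part of $M$ also centralises $L$, so that $L\le Z(G)$ is cyclic of order $p$ and a central cyclic extension of the $p$-nilpotent group $G/L$ is $p$-nilpotent, the desired contradiction.

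I expect the genuine obstacle to be the non-soluble alternative, namely excluding a non-abelian minimal normal subgroup $L$. The partial $\Pi$-property of $P'$ collapses $P'\cap L$ to $1$ and thereby makes $P\cap L$ abelian, but the hard step is converting ``$N_G(P)$ is $p$-nilpotent'' into a fusion-theoretic contradiction with the non-trivial coprime action of $N_L(P\cap L)$ on the abelian group $P\cap L$; this is exactly where Burnside's normal $p$-complement theorem and the hypothesis $(|G|,p-1)=1$ must be pushed hardest, noting that for $p=2$ the arithmetic condition is vacuous and the whole weight falls on the fusion analysis. The parallel difficulty in the soluble case is the treatment of a non-abelian $P$, where $P'\neq1$ is precisely what makes the second hypothesis indispensable (without it groups such as $S_4$ at $p=2$ would survive), and where one must again combine $N_G(P)=P\times H$, the resulting self-normalising structure, and the finer instances of the $P'$-condition to force $L\le Z(G)$.
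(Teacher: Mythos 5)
Your overall architecture (minimal counterexample, quotient reductions, extracting $P'\cap L\unlhd G$ from the witnessing chief series, Burnside) points in the right direction, but there are two genuine gaps, and they sit exactly where the difficulty of the theorem lies. First, the quotient reduction is unjustified. You assert that both hypotheses pass to $G/N$ for an arbitrary (minimal) normal subgroup $N$, so that every proper quotient is $p$-nilpotent and $G$ acquires a unique minimal normal subgroup $L$ with $G/L$ $p$-nilpotent. But the quotient lemma for the partial $\Pi$-property (Lemma \ref{over}) only applies to $H=P'$ when $N\leq P'$ or $(|N|,|P'|)=1$; for a minimal normal $p$-subgroup $N\not\leq P'$ neither holds, and the property is defined by the existence of \emph{some} witnessing chief series, so you cannot transport it to $G/N$ for free. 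The paper's proof is careful precisely here: it only passes to $G/O_{p'}(G)$ and to $G/K$ where $K=G_1$ is the \emph{first term of the chief series witnessing the property for} $P'$, and it verifies by hand that the truncated series still witnesses the property for $P'K/K$. Without this, your ``usual configuration'' (unique minimal normal subgroup, $L\not\leq\Phi(G)$, $G/L$ $p$-nilpotent) is not available.

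Second, the two steps you explicitly defer are the actual content, and neither is routine in the form you set it up. In the non-abelian case you need to turn ``$N_G(P)=P\times H$'' into a contradiction with a non-trivial $p'$-action of $N_L(P\cap L)$ on the abelian group $P\cap L$; an element normalizing $P\cap L$ need not normalize $P$, so this does not follow from your control of $N_G(P)$ and requires a further fusion or Frattini-type argument you have not supplied. The paper sidesteps the non-abelian case entirely: using Lemma \ref{subgroup} and minimality it first proves that \emph{every proper subgroup of $G$ containing $P$ is $p$-nilpotent} (a step absent from your outline), applies this to $N_G(P\cap K)<G$, deduces $P\cap K\leq Z(N_K(P\cap K))$, and then Burnside forces $K$ to be $p$-nilpotent, hence (as $O_{p'}(G)=1$) a $p$-group; so $G$ is $p$-soluble and no non-abelian minimal normal subgroup ever arises. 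In the abelian case, your final claim that the $p'$-part of a complement centralizes $L$ (so that $L\leq Z(G)$) is exactly what must be proved, not assumed; the paper needs all of its Steps 4--9 (reduction to $G=PQ$, $N_G(P)=P$ maximal, fixed-point-free action of $Q$ on $K$, Frattini argument, Burnside) to reach the contradiction. As written, the proposal proves the easy direction and the normality of $P'\cap L$, but leaves the decisive contradictions unestablished.
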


    \begin{remark}
    	 The hypothesis that ``$ P' $ satisfies the partial $ \Pi $-property in $ G $'' in Theorem \ref{first} is essential. Let $ G = PSL(2, 7) $ and $ P $ a Sylow $ 2 $-subgroup of $ G $.  It is not difficult to  see that  every maximal subgroup of $ P $ satisfies the partial $ \Pi $-property in
    	$ N_{G}(P)=P $ and $ P' $ does not satisfy the partial $ \Pi $-property in $ G $. However, $ G $ is not $ 2 $-nilpotent or even $ 2 $-soluble.
    	
    	In fact, we cannot remove the hypothesis that ``$ p $ is a prime dividing the order of $ G $ with
    	$ (|G|, p-1) = 1 $'' in Theorem \ref{first}. Let $ P $ be a Sylow $ 3 $-subgroup of $ G = A_{5} $, the alternating
    	group of degree $ 5 $. Then every maximal subgroup of $ P $ satisfies the partial $ \Pi $-property in
    	$ N_{G}(P) $ and $ P' $ satisfies the partial $ \Pi $-property in $ G $. However, $ A_{5} $ is not $ 3 $-nilpotent.
    \end{remark}

    Based on Theorem \ref{first}, we can prove the following theorem.

    %In this note, we study  the influence of second minimal or second maximal subgroups of Sylow  subgroups  satisfying the partial $ \Pi $-property  on the structure of finite groups.

    %Consider an elementary abelian group $ U=\langle x, y|x^{5}=y^{5}=1, xy=yx \rangle $ of order $ 25 $.  Let $ \alpha $ be an automorphism of $ U $ of order $ 3 $ such that $ x^{\alpha}=y, y^{\alpha}=x^{-1}y^{-1} $. Let  $ V=\langle a, b \rangle $ be a copy of $ U $ and $ G = (U\times V)\rtimes \langle \alpha \rangle $. For any subgroup $ H $ of $ G $ of order $ 25 $, there exists a minimal normal subgroup $ K $ such that $ H\cap K = 1 $ (for details, see \cite[Example]{Guo-Xie-Li}). Then $ G $ has a  chief series $$ \varGamma_{G}: 1 =G_{0} <K= G_{1} < HK=G_{2} < G_{3}= G $$  such that for every $ G $-chief factor $ G_{i}/G_{i-1} $ $(1\leq i\leq 3) $ of $ \varGamma_{G} $, $ | G  : N _{G} (HG_{i-1}\cap G_{i})| $ is a $ 5 $-number. It means that $ H $ satisfies the partial $ \Pi $-property in $ G $. However, $ G $ is not $ 5 $-supersoluble.

   \begin{theorem}\label{second}
   	Let $ p $ be a prime dividing the order of $ G $ with $ (|G|, p-1) = 1 $ and let $ N $ a normal subgroup of $ G $ such that $ G/N $ is $ p $-nilpotent. Let $ P \in \mathrm{Syl}_{p}(N) $ and suppose that every
   	maximal subgroup of $ P $ satisfies the partial $ \Pi $-property in $ N_{G}(P) $ and $ P' $ satisfies the partial $ \Pi $-property in $ G $. Then $ G $ is $ p $-nilpotent.
   \end{theorem}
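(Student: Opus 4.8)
The plan is to reduce the statement to Theorem \ref{first} and Theorem \ref{max} after factoring out a suitable normal $p'$-subgroup. The first step is to show that $N$ itself is $p$-nilpotent. If $p \nmid |N|$, then $N$ is a $p'$-group and $G/N$ is $p$-nilpotent by hypothesis, so $G$ is $p$-nilpotent at once; thus I may assume $p \mid |N|$. Since $N \leq G$ we have $(|N|, p-1) = 1$, and $P \in \mathrm{Syl}_p(N)$. Using the standard inheritance of the partial $\Pi$-property by intermediate subgroups, the maximal subgroups of $P$ satisfy the partial $\Pi$-property in $N_N(P) = N \cap N_G(P)$ and $P'$ satisfies it in $N$. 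Theorem \ref{first} then yields that $N$ is $p$-nilpotent.

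Next, let $K = O_{p'}(N)$. As $K$ is characteristic in $N \trianglelefteq G$, we have $K \trianglelefteq G$, and the decomposition $N = K \rtimes P$ shows that $\bar N := N/K \cong P$ is a normal $p$-subgroup of $\bar G := G/K$ with $\bar G / \bar N \cong G/N$ being $p$-nilpotent. Writing $\bar P = PK/K = \bar N$, the heart of the argument is to verify that every maximal subgroup of $\bar N$ satisfies the partial $\Pi$-property in $\bar G$. Here the Frattini argument $G = N\,N_G(P) = K\,N_G(P)$ is decisive: it gives $\bar G = N_G(P)K/K \cong N_G(P)/(N_G(P) \cap K)$, so the hypothesis on the maximal subgroups of $P$ in $N_G(P)$, pushed through the quotient by the $p'$-group $N_G(P) \cap K$, supplies exactly the required statement about the maximal subgroups $\overline{P_1}$ (for $P_1$ maximal in $P$) of $\bar P$ in $\bar G$. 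Applying Theorem \ref{max} to the normal $p$-subgroup $\bar N$ of $\bar G$ then gives $\bar N \leq Z_{\UU}(\bar G)$.

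Finally I would upgrade this to hypercentrality. Each $\bar G$-chief factor below $\bar N$ is cyclic of order $p$, so the conjugation action embeds a quotient of $\bar G$ into $\mathrm{Aut}(C_p) \cong C_{p-1}$; since $(|\bar G|, p-1) \mid (|G|, p-1) = 1$, this action is trivial and the factor is central. Hence $\bar N \leq Z_{\infty}(\bar G)$. A routine induction on $|\bar N|$, quotienting by a central subgroup of order $p$ and invoking the classical fact that a central subgroup of order $p$ with $p$-nilpotent quotient forces $p$-nilpotency, shows that $\bar G = G/K$ is $p$-nilpotent; and as $K$ is a $p'$-group, this makes $G$ itself $p$-nilpotent.

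I expect the main obstacle to be the transfer step in the second paragraph: the hypothesis only controls maximal subgroups of $P$ inside the possibly small group $N_G(P)$, whereas Theorem \ref{max} demands control in the whole quotient $\bar G$. It is precisely the Frattini identity $G = K\,N_G(P)$ that converts this local information into global information, after which the quotient behaviour of the partial $\Pi$-property makes Theorem \ref{max} applicable.
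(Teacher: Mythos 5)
Your proof is correct, but its second half takes a genuinely different route from the paper's. Both arguments open the same way: Lemma \ref{subgroup} plus Theorem \ref{first} show that $N$ is $p$-nilpotent, and both then pass to $\overline{G}=G/K$, where $K$ is the normal $p$-complement of $N$. The paper does this passage by induction on $|G|$, citing its Lemma \ref{satisfies}; you do it directly via the Frattini identity $G=KN_{G}(P)$ together with Lemma \ref{over} and the isomorphism $N_{G}(P)/(N_{G}(P)\cap K)\cong \overline{G}$ --- which is in substance a re-proof of exactly the special case of Lemma \ref{satisfies} you need, with the bonus that Frattini makes the target group all of $\overline{G}$ rather than just $N_{\overline{G}}(\overline{P})$. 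After that the proofs diverge. The paper, now in the situation $N=P\unlhd G$, takes the preimage $T$ of the normal $p$-complement of $G/N$, applies Schur--Zassenhaus and then Theorem \ref{first} to $T$ itself, and extracts a characteristic $p'$-complement $T_{1}\unlhd G$. You instead apply Theorem \ref{max} to get $\overline{P}\leq Z_{\UU}(\overline{G})$, use $(|G|,p-1)=1$ to promote the cyclic chief factors of order $p$ to central ones (so $\overline{P}$ lies in the hypercenter of $\overline{G}$), and then lift $p$-nilpotency from $\overline{G}/\overline{P}\cong G/N$ through central factors of order $p$ via Burnside's theorem. Your route costs a little extra verification (the hypercentral lifting induction, which is standard and which you sketch correctly), but it buys transparency: it pinpoints where the arithmetic hypothesis $(|G|,p-1)=1$ enters the endgame, and it makes explicit how local control inside $N_{G}(P)$ becomes global control in $\overline{G}$. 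The paper's route is shorter given that Lemma \ref{satisfies} is already on hand, and it stays entirely at the level of $p$-nilpotency criteria, never needing the structural conclusion of Theorem \ref{max}.
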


At last, we extend the above theorem to formation case.

   \begin{theorem}\label{third}
   	Let $ \FF $ be a saturated formation containing the class of all supersoluble groups
   	$ \UU $ and let $ N $ be a normal subgroup of a group $ G $ such that $ G/N\in \FF $. Suppose that, for all
   	primes $ p $ dividing the order of $ N $ and for all $ P \in \mathrm{Syl}_{p}(N) $, every maximal subgroup of $ P $
   	satisfies the partial $ \Pi $-property in $ N_{G}(P) $ and suppose that $ P' $ satisfies the partial $ \Pi $-property in $ G $. Then $ G \in \FF $.
   \end{theorem}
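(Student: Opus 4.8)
The plan is to isolate the group-theoretic core of the statement, namely the assertion
\[
N \leq Z_{\UU}(G),
\]
and then to deduce $G\in\FF$ from it by standard saturated-formation machinery. Since $\UU\subseteq\FF$ and $\FF$ is saturated, every cyclic $G$-chief factor is $\FF$-central, so $Z_{\UU}(G)\leq Z_{\FF}(G)$; hence $N\leq Z_{\FF}(G)$, and together with $G/N\in\FF$ the standard fact that a saturated formation satisfies ``$N\leq Z_{\FF}(G)$ and $G/N\in\FF$ imply $G\in\FF$'' gives the theorem. Thus everything reduces to the self-contained claim $N\leq Z_{\UU}(G)$, which I would prove by induction on $|N|$, the case $N=1$ being trivial.

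Let $p$ be the smallest prime dividing $|N|$ and fix $P\in\mathrm{Syl}_{p}(N)$, so that $(|N|,p-1)=1$ by minimality of $p$. Using the standard inheritance properties of the partial $\Pi$-property --- that it passes from a group to any normal subgroup containing the given subgroup, and from a group to its quotients --- the hypotheses descend to $N$: since $P'\leq N\trianglelefteq G$, the subgroup $P'$ satisfies the partial $\Pi$-property in $N$, and since each maximal subgroup of $P$ lies in $N_{N}(P)=N\cap N_{G}(P)\trianglelefteq N_{G}(P)$, it satisfies the partial $\Pi$-property in $N_{N}(P)$. Applying Theorem \ref{first} with $N$ in the role of $G$ shows that $N$ is $p$-nilpotent. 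Let $K$ be its normal $p$-complement; then $K$ is characteristic in $N$, hence $K\trianglelefteq G$, and the pair $(G,K)$ again satisfies all the hypotheses because for every prime $q\neq p$ a Sylow $q$-subgroup of $K$ is a Sylow $q$-subgroup of $N$. As $|K|<|N|$, induction yields $K\leq Z_{\UU}(G)$.

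It remains to treat the top $p$-layer by showing $N/K\leq Z_{\UU}(G/K)$, since refining a chief series through $K$ and $N$ and combining this with $K\leq Z_{\UU}(G)$ gives that every $G$-chief factor below $N$ is cyclic, i.e.\ $N\leq Z_{\UU}(G)$. Now $N/K$ is a normal $p$-subgroup of $G/K$, and I would like to apply Theorem \ref{max} there; the difficulty is that the maximal-subgroup hypothesis is only available in the possibly small group $N_{G}(P)$, not in $G/K$. This is overcome by a Frattini argument: from $N\trianglelefteq G$, $N=KP$ and $P\in\mathrm{Syl}_{p}(N)$ we get $G=N\,N_{G}(P)=K\,N_{G}(P)$, whence $G/K\cong N_{G}(P)/N_{K}(P)$ with $N_{K}(P)=K\cap N_{G}(P)\trianglelefteq N_{G}(P)$, and under this isomorphism $N/K$ corresponds to the image of $P$. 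Transporting the given partial $\Pi$-property of each maximal subgroup of $P$ in $N_{G}(P)$ through the quotient inheritance lemma, every maximal subgroup of $N/K$ satisfies the partial $\Pi$-property in $G/K$, and Theorem \ref{max} gives $N/K\leq Z_{\UU}(G/K)$, completing the induction.

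The main obstacle I anticipate is precisely this transport step: the hypothesis on maximal subgroups is local (stated inside $N_{G}(P)$), while Theorem \ref{max} demands it in the full quotient $G/K$, and the Frattini identification $G/K\cong N_{G}(P)/N_{K}(P)$ is what bridges the gap. The remaining care goes into the bookkeeping of the inheritance lemmas --- verifying at each inductive stage that the partial $\Pi$-property is correctly passed to the normal subgroup $N$, to the complement $K$, and to the relevant quotients --- after which the reduction to $N\leq Z_{\UU}(G)$ and the closing formation argument are routine.
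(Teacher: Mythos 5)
Your proposal is correct, but it follows a genuinely different route from the paper's. The paper argues by induction on $|G|$: it uses Corollary \ref{co} to make $N$ a Sylow tower group of supersoluble type, quotients out the (normal) Sylow subgroup $P$ of $N$ for the \emph{largest} prime, reduces to the case $\Phi(P)=1$ with $P$ minimal normal in $G$, and then reruns the chief-series computation by hand (choosing $P_1$ maximal in $P$ with $P_1\unlhd S\in\mathrm{Syl}_p(G)$) to force $|P|=p$, closing with Lemma \ref{in}. You instead isolate and prove the stronger standalone claim $N\leq Z_{\UU}(G)$ by induction on $|N|$, peeling the Sylow tower from the \emph{bottom} (smallest prime first, so Theorem \ref{first} applies via Lemma \ref{subgroup}), and your key move --- the Frattini identification $G=KN_{G}(P)$, $G/K\cong N_{G}(P)/N_{K}(P)$, which globalizes the local hypothesis in $N_{G}(P)$ to all of $G/K$ so that Theorem \ref{max} can be invoked --- is sound; it is in effect Lemma \ref{satisfies} together with Lemma \ref{NG}(2), so the paper already supplies the transport you need. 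Two small remarks: your closing step ``$N\leq Z_{\FF}(G)$ and $G/N\in\FF$ imply $G\in\FF$'' is a standard fact for saturated formations but is not among the paper's stated lemmas; it can be replaced, entirely within the paper's toolkit, by iterating Lemma \ref{in} along a $G$-chief series of $N$ (each factor being cyclic once $N\leq Z_{\UU}(G)$ is known). What your approach buys is the sharper intermediate conclusion $N\leq Z_{\UU}(G)$ and the delegation of the chief-series bookkeeping to Theorem \ref{max}; what the paper's approach buys is that it never leaves elementary induction on $|G|$ and needs only the cyclic-normal-subgroup case (Lemma \ref{in}) of the hypercentre machinery.
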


%\vskip1cm
\section{Preliminaries}

  %\begin{lemma}[{\cite[Lemma 2.1(1)]{Chen-2013}}]\label{conjugate}
  	%Let $ H $ be a subgroup of $ G $. If $ H $ satisfies the partial $ \Pi $-property in $ G $, then $ H^{g} $ satisfies the partial $ \Pi $-property in $ G $ for every element $ g \in G $.
  %\end{lemma}

In this section, we will restate some basic lemmas, which are required in the proofs of our main results.

  \begin{lemma}[{\cite[Lemma 2.1(3)]{Chen-2013}}]\label{over}
  	Let $ H \leq G $ and $ N \unlhd G $. If either $ N \leq H $ or $ (|H|, |N|)=1 $ and $ H $ satisfies the partial $ \Pi $-property in $ G $, then $ HN/N $ satisfies the partial $ \Pi $-property in $ G/N $.
  \end{lemma}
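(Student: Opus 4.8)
The goal is to prove Theorem~\ref{third}: under the stated hypotheses, $G\in\FF$. The natural strategy is induction on $|G|$, aiming to reduce to the supersoluble situation so that Theorem~\ref{first} (applied prime by prime) and the hypercenter machinery can be invoked, and then to pass from $\UU$ to the general saturated formation $\FF$ containing $\UU$. First I would try to show that $N$ is contained in the supersoluble hypercenter $Z_{\UU}(G)$. Since $G/N\in\FF$ and $\FF$ is a saturated formation with $\UU\subseteq\FF$, once we know $N\le Z_{\UU}(G)$ the conclusion $G\in\FF$ follows from the standard fact that $G/Z_{\UU}(G)\in\FF$ whenever $G/N\in\FF$ (this is the usual hypercenter-pushdown argument for saturated formations). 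So the crux is establishing $N\le Z_{\UU}(G)$.

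\textbf{Reduction to $p$-nilpotency prime by prime.} To prove $N\le Z_{\UU}(G)$ I would work with the smallest prime $p$ dividing $|N|$ and the given data for that prime. The hypotheses supply, for each prime $p\mid|N|$ and each $P\in\mathrm{Syl}_p(N)$, that every maximal subgroup of $P$ satisfies the partial $\Pi$-property in $N_G(P)$ and that $P'$ satisfies the partial $\Pi$-property in $G$. Because $(|G|,p-1)=1$ is \emph{not} assumed here, I cannot directly quote Theorem~\ref{first}; instead the plan is to build a Sylow tower / $p$-nilpotency structure on $N$ by induction. Concretely: choose a minimal normal subgroup (or a suitable normal $p$-subgroup) and pass to the quotient $G/L$, using Lemma~\ref{over} to verify that the partial $\Pi$-property hypotheses are inherited by the images $P'L/L$ and by the maximal subgroups of $PL/L$ in the relevant normalizers. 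This is exactly where the quotient lemma is the workhorse: it guarantees that the inductive hypothesis applies to $G/L$ with $N/L$ playing the role of $N$, so that $N/L\le Z_{\UU}(G/L)$, and then one lifts back to $N\le Z_{\UU}(G)$.

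\textbf{The inductive engine.} For the base of the climb I would establish that $N$ has a normal Sylow system behaving well: using Theorems~\ref{max} and~\ref{maximal} applied inside appropriate normal subgroups, together with the condition on maximal subgroups of $P$ in $N_G(P)$, one deduces either that a chief factor of $G$ inside $N$ is cyclic, or that the relevant Sylow subgroup has order $p$ (the dichotomy in Theorem~\ref{maximal}). The hypothesis on $P'$ is what excludes the pathological non-$p$-soluble configurations (as the remark following Theorem~\ref{first} shows is genuinely necessary), forcing the $p$-chief factors to be cyclic. Iterating over all primes $p\mid|N|$ and combining gives that every $G$-chief factor below $N$ is cyclic, i.e. $N\le Z_{\UU}(G)$.

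\textbf{Main obstacle.} The hard part will be the inductive reduction itself: ensuring the two distinct partial $\Pi$-property hypotheses (one about maximal subgroups of $P$ measured in $N_G(P)$, the other about $P'$ measured in the whole group $G$) are \emph{both} correctly inherited in the quotient $G/L$, since they live in different ambient groups and $N_{G/L}(PL/L)$ need not equal $N_G(P)L/L$ in a transparent way. Controlling the normalizer under the quotient, and checking that maximal subgroups of $PL/L$ correspond to images of maximal subgroups of $P$ so that Lemma~\ref{over} applies cleanly, is the delicate bookkeeping on which the whole induction rests. Once that inheritance is secured, the passage $N\le Z_{\UU}(G)\Rightarrow G\in\FF$ is routine formation theory.
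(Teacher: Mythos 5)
There is a fundamental mismatch here: the statement you were asked to prove is Lemma~\ref{over} — the assertion that if $H\leq G$, $N\unlhd G$, and either $N\leq H$ or $(|H|,|N|)=1$, then the partial $\Pi$-property of $H$ in $G$ descends to $HN/N$ in $G/N$ — but your proposal is entirely a proof plan for Theorem~\ref{third}. Worse, your plan explicitly \emph{invokes} Lemma~\ref{over} as its key tool (you call it ``the workhorse'' of the inductive reduction), so you are assuming the very statement you were supposed to establish. Nothing in your text engages with what a proof of the lemma requires: starting from a chief series $1=G_0<G_1<\cdots<G_n=G$ of $G$ witnessing the partial $\Pi$-property for $H$, one must produce a chief series of $G/N$ (typically by passing to the series $1=G_0N/N\leq G_1N/N\leq\cdots\leq G_nN/N$, discarding repetitions and refining if necessary) and then verify, chief factor by chief factor, that the normalizer index $|G/N \,\colon\, N_{G/N}((HN/N)(G_{i-1}N/N)\cap G_iN/N)|$ is a $\pi$-number for the appropriate set $\pi$. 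The two hypotheses $N\leq H$ and $(|H|,|N|)=1$ enter precisely to control the identification $HN\cap G_iN$ with $(H\cap G_i)N$ (or the corresponding coprime intersection computation); none of this bookkeeping appears in your write-up.

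For calibration: the paper itself does not prove Lemma~\ref{over} — it is quoted from Chen and Guo \cite[Lemma 2.1(3)]{Chen-2013} — so the expected content was either that citation or a self-contained series-manipulation argument in the spirit of the paper's own proof of Lemma~\ref{subgroup} (which handles the dual problem of passing to subgroups, with exactly the kind of intersection identities, such as $H(G_{i-1}\cap N)\cap(G_i\cap N)=(H\cap G_i)G_{i-1}\cap N$, that a quotient version also needs). As it stands, your submission proves nothing about the lemma; and even read charitably as an outline for Theorem~\ref{third}, it diverges from the paper's actual argument, which does not push $N$ into $Z_{\UU}(G)$ but instead reduces via Corollary~\ref{co} to $P$ being a minimal normal subgroup, then uses a chief-series argument on a maximal subgroup $P_1\unlhd S$ to force $|P|=p$ and concludes by Lemma~\ref{in}. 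You should restart by proving the lemma itself.
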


  \begin{lemma}\label{subgroup}
  	Let  $ H\leq N \leq G $.  If $ H $ is a $ p $-subgroup of $ G $ and $ H $ satisfies the partial $ \Pi $-property in $ G $, then $ H $ satisfies the partial $ \Pi $-property in $ N $.
  \end{lemma}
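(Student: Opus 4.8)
The plan is to transport a chief series of $G$ witnessing the partial $\Pi$-property of $H$ down to $N$ by intersection and refinement. I would start from a chief series $\varGamma_{G}: 1 = G_{0} < G_{1} < \cdots < G_{n} = G$ for which $H$ satisfies the defining condition, and put $K_{i} = G_{i} \cap N$ for $0 \leq i \leq n$. Since each $G_{i} \unlhd G$ and $N \leq G$, every $K_{i}$ is normal in $N$, so $1 = K_{0} \leq K_{1} \leq \cdots \leq K_{n} = N$ is a normal series of $N$; refining it gives a chief series $\varGamma_{N}$ of $N$, which I claim witnesses the property. Here the hypothesis that $H$ is a $p$-group is used at the outset: every section $HX/X$ is then a $p$-group, so each trace $HB/B \cap A/B$ appearing in the condition for $\varGamma_{N}$ is a $p$-group, and ``the index is a $\pi$-number'' simply means ``the index is a power of $p$'' (and is vacuous when the trace is trivial). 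The same remark applies on the $G$-side.

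Next I would fix a chief factor $A/B$ of $\varGamma_{N}$, say with $K_{i-1} \leq B < A \leq K_{i}$, and attach to it the object $W = HG_{i-1} \cap G_{i}$ belonging to the $G$-chief factor $G_{i}/G_{i-1}$. Since $B \unlhd N$ and $B \leq HB \cap A$, the correspondence theorem gives $|N/B : N_{N/B}(HB/B \cap A/B)| = |N : N_{N}(HB \cap A)|$, and Dedekind's modular law yields $HB \cap A = (H \cap A)B$ because $B \leq A$. On the other side, the partial $\Pi$-property of $H$ in $G$, together with $W/G_{i-1} = HG_{i-1}/G_{i-1} \cap G_{i}/G_{i-1}$ being a $p$-group, shows that $|G : N_{G}(W)|$ is a power of $p$ (using $G_{i-1} \leq W$, so $N_{G/G_{i-1}}(W/G_{i-1}) = N_{G}(W)/G_{i-1}$). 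It therefore suffices to prove that $|N : N_{N}(HB \cap A)|$ divides this $p$-power.

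The heart of the argument is a chain of normalizer inclusions. Writing $V = W \cap N$, repeated use of Dedekind's law (with $H \leq N$ and $K_{i-1} \leq K_{i}$) gives $V = HK_{i-1} \cap K_{i} = (H \cap K_{i})K_{i-1}$, then $V \cap A = (H \cap A)K_{i-1}$, and finally $(H \cap A)B = (V \cap A)B$ since $K_{i-1} \leq B$. Now any element of $N$ normalizing $W$ normalizes $W \cap N = V$; and any element of $N$ normalizing $V$ also normalizes the $N$-normal subgroups $A$ and $B$, hence normalizes $V \cap A$ and the product $(V \cap A)B = HB \cap A$. This gives $N_{N}(W) \leq N_{N}(V) \leq N_{N}(HB \cap A)$, so $|N : N_{N}(HB \cap A)|$ divides $|N : N_{N}(W)|$, which divides the power of $p$ equal to $|G : N_{G}(W)|$. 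Thus $\varGamma_{N}$ witnesses the partial $\Pi$-property of $H$ in $N$.

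I expect the main obstacle to be essentially bookkeeping rather than conceptual: the chief factors of $\varGamma_{N}$ produced by refining $K_{i-1} \leq K_{i}$ do \emph{not} correspond to single $G$-chief factors, so one cannot argue factor-by-factor against $\varGamma_{G}$ directly. The device that circumvents this is to compare every refined trace $HB \cap A$ against the single object $W$ attached to $G_{i}/G_{i-1}$, and the Dedekind identities above are exactly what let one push the $p$-power normalizer bound for $W$ down through the refinement. Carrying out those identities and normalizer inclusions while keeping careful track of which subgroup is normal in which group is where the real care is needed.
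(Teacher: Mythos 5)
Your proposal is correct and follows essentially the same route as the paper's own proof: intersect the witnessing chief series of $G$ with $N$, refine to a chief series of $N$, identify the traces via Dedekind's law, and push the $p$-power normalizer bound for $HG_{i-1}\cap G_i$ down through the refinement using the inclusions $N_{N}(W)\leq N_{N}(W\cap N)\leq N_{N}(HB\cap A)$. The paper's write-up is merely more condensed; no substantive difference.
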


   \begin{proof}
  	By hypothesis, there exists a chief series $ \varGamma_{G}: 1 =G_{0} < G_{1} < \cdot\cdot\cdot < G_{n}= G $ of $ G $ such that for every $ G $-chief factor $ G_{i}/G_{i-1}$ $ (1\leq i\leq n) $ of $ \varGamma_{G} $, $ | G  : N _{G} (HG_{i-1}\cap G_{i})| $ is a $ p $-number. Therefore, $ \varGamma_{N} : 1=G_{0}\cap N\leq G_{1}\cap N\leq\cdot\cdot\cdot <G_{n}\cap N=N $ is, avoiding repetitions, a normal series of $ N $.  For any  normal section $ (G_{i}\cap N)/(G_{i-1}\cap N) $ $ (1\leq i\leq n) $ of $ N $,   we have that $ H(G_{i-1}\cap N)\cap (G_{i}\cap N)=(H\cap G_{i})G_{i-1}\cap N $. Note that $ | N  : N\cap N _{G} (HG_{i-1}\cap G_{i})| $ is a $ p $-number, and so $ | N  : N _{N} (HG_{i-1}\cap G_{i}\cap N)| $ is a $ p $-number.  Since $ H(G_{i-1} \cap N)\cap (G_{i}\cap N)=(H\cap G_{i})G_{i-1}\cap N $, it follows that $ | N  : N _{N} (H(G_{i-1} \cap N)\cap (G_{i}\cap N))| $ is a $ p $-number. Let  $ A/B$ be a chief factor of $ N $ such that $ G_{i-1}\cap N\leq B\leq A\leq G_{i}\cap N $. Then $ N _{N} (H(G_{i-1} \cap N)\cap (G_{i}\cap N))\leq N _{N} ((H(G_{i-1} \cap N)\cap A)B)=N_{N}(HB\cap A) $. Hence $ |N:N_{N}(HB\cap A)| $ is a $ p $-number. This  means that  $ H $ satisfies the partial $ \Pi $-property in $ N $.
  \end{proof}

\begin{lemma}\label{NG}
	 Let $ N $ be a normal subgroup of a group $ G $ and let $ P $ be a $ p $-subgroup
	of $ G $. Then $ N_{G/N}(PN/N) = N_{G}(P)N/N $ if one of the following holds:
	
	{\rm (1)} $ P $ is a Sylow $ p $-subgroup of $ G $.
	
    {\rm (2)} $ (|N|, p)=1 $.
\end{lemma}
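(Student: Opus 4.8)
The plan is to establish the set equality by proving the two inclusions separately, observing that one of them requires no hypothesis at all and that the two listed hypotheses can be handled uniformly through a single Frattini-type argument.

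First I would dispose of the inclusion $N_{G}(P)N/N \le N_{G/N}(PN/N)$, which holds in complete generality: if $g \in N_{G}(P)$, then $g$ normalizes $P$ and hence normalizes $PN$, so the coset $gN$ normalizes $PN/N$. This needs neither condition {\rm (1)} nor {\rm (2)}. The content of the lemma therefore lies in the reverse inclusion, and the key preliminary observation is that under \emph{either} hypothesis $P$ is a Sylow $p$-subgroup of $PN$. Indeed, if {\rm (1)} holds then $P$ is already a Sylow $p$-subgroup of $G \ge PN$, so it is certainly a Sylow $p$-subgroup of $PN$; if {\rm (2)} holds then $(|N|,p)=1$ forces $P \cap N = 1$, so $|PN| = |P|\,|N|$ with $|N|$ a $p'$-number, whence $|PN|_{p} = |P|$ and again $P$ is a Sylow $p$-subgroup of $PN$. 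This reduction is the heart of the proof.

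With this in hand I would prove $N_{G/N}(PN/N) \le N_{G}(P)N/N$ by a Frattini argument. Take $gN \in N_{G/N}(PN/N)$. Unwinding the normalizer condition, $(gN)(PN/N)(gN)^{-1} = PN/N$ means $g(PN)g^{-1}N/N = PN/N$; since $N \le g(PN)g^{-1}$ and $N \le PN$, this gives $g(PN)g^{-1} = PN$, that is, $g \in N_{G}(PN)$. Consequently $g^{-1}Pg$ is again a Sylow $p$-subgroup of $PN$, so by Sylow's theorem there exists $y \in PN$ with $y(g^{-1}Pg)y^{-1} = P$. Then $yg^{-1} \in N_{G}(P)$, hence $gy^{-1} = (yg^{-1})^{-1} \in N_{G}(P)$, and therefore $g = (gy^{-1})\,y \in N_{G}(P)\cdot PN \subseteq N_{G}(P)N$, using $P \le N_{G}(P)$. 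Thus $gN \in N_{G}(P)N/N$, completing the inclusion and hence the proof.

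The step I expect to be the crux is the uniform observation that $P$ is a Sylow $p$-subgroup of $PN$ in both cases; once that is in place, the remainder is a routine Frattini/Sylow computation and the coset bookkeeping in translating between $N_{G/N}(PN/N)$ and $N_{G}(PN)$. No genuine obstacle beyond careful handling of the conjugation directions is anticipated.
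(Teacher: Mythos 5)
Your proof is correct. Note, however, that the paper does not actually write out an argument here: it disposes of the lemma by citing two separate standard results, Huppert (Kapitel I, Hilfssatz 7.7) for case (1) and Isaacs (Lemma 7.7) for case (2). What you have done is supply a self-contained proof that unifies both cases, and your unification is exactly the right one: the only role of either hypothesis is to guarantee that $P$ is a Sylow $p$-subgroup of $PN$ (in case (1) because $P$ is already Sylow in $G$, in case (2) because $P\cap N=1$ and $|N|$ is a $p'$-number), after which a single Frattini-type argument --- pass from $gN\in N_{G/N}(PN/N)$ to $g\in N_G(PN)$, conjugate $P$ back inside $PN$ by Sylow's theorem, and absorb $P$ into $N_G(P)$ --- yields the nontrivial inclusion, the other inclusion being trivial. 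This is in fact the standard argument underlying both cited results, so mathematically you are on the same route the references take; the practical difference is only that the paper outsources the work while you make the common mechanism of the two cases explicit, which arguably explains better why the two superficially different hypotheses appear in one lemma. All the individual steps (the identification $g(PN)g^{-1}=PN$ via $N\le g(PN)g^{-1}$, the conjugation bookkeeping $yg^{-1}\in N_G(P)$, and the final absorption $N_G(P)\cdot PN=N_G(P)N$) check out.
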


\begin{proof}
	 The result immediately follows from \cite[Kapitel I, Hilfssatz 7.7]{Huppert-1967} and \cite[Lemma 7.7]{isaacs2008finite}
\end{proof}

  %\begin{lemma}[{\cite[Lemma 2.3]{Qiu-Liu-Chen}}]\label{pass}
  	%Let $ H $ be a $ p $-subgroup of $ G $  and $ N $ be  a normal subgroup of $ G $  containing $ H $. If  $ H $ satisfies the partial $ \Pi $-property in $ G $, then $ G $ has  a chief series $$ \varOmega_{G}: 1 =G^{*}_{0} < G^{*}_{1} < \cdot\cdot\cdot <G^{*}_{r}=N < \cdot\cdot\cdot < G^{*}_{n}= G $$  passing through $ N $ such that $ |G:N_{G}(HG^{*}_{i-1}\cap G^{*}_{i})| $ is a $ p $-number  for every $ G $-chief factor $ G^{*}_{i}/G^{*}_{i-1} $ $ (1\leq i\leq n) $ of $ \varOmega_{G} $.
  %\end{lemma}

    \begin{lemma}\label{also}
    	 Let $ K $ be a normal subgroup of   $ G $ and let $ P $ be a
    	Sylow $ p $-subgroup of $ G $.  If every maximal subgroup of $ P $ satisfies the partial $ \Pi $-property in $ N_{G}(P) $, then  every maximal subgroup of $ PK/K $ satisfies the partial $ \Pi $-property in $ N_{G/K}(PK/K) $.
    \end{lemma}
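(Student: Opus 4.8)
The plan is to fix an arbitrary maximal subgroup of $PK/K$, pull it back to a maximal subgroup of $P$, and transfer the partial $\Pi$-property across the quotient by $K$ in two applications of Lemma \ref{over}. I would begin by recording the normalizer identity: since $P\in\mathrm{Syl}_{p}(G)$, the image $PK/K$ is a Sylow $p$-subgroup of $G/K$, so Lemma \ref{NG}(1) gives $N_{G/K}(PK/K)=N_{G}(P)K/K$. Writing $L=N_{G}(P)$, the ambient group appearing in the conclusion is thus $LK/K$.

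Next I would identify the maximal subgroups of $PK/K$. As $PK/K\cong P/(P\cap K)$, the correspondence theorem shows that every maximal subgroup of $PK/K$ is of the form $MK/K$ for some maximal subgroup $M$ of $P$ containing $P\cap K$; indeed $P\cap MK=M(P\cap K)=M$ by the modular law, so $[PK:MK]=[P:M]=p$. Set $D=P\cap K$. Since $P\unlhd L$ and $K\unlhd G$, for every $g\in L$ we have $D^{g}=P^{g}\cap K^{g}=P\cap K=D$, so $D$ is a normal $p$-subgroup of $L$, and $D\le M$ by construction.

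The transfer then proceeds in two steps. By hypothesis $M$ satisfies the partial $\Pi$-property in $L$, and since $D\le M$ the containment case of Lemma \ref{over} yields that $M/D$ satisfies the partial $\Pi$-property in $L/D$. For the second step I would note that $M\cap K=P\cap K=D$ (as $D\le M\le P$) and that $D$ is a Sylow $p$-subgroup of $K$, hence of $L\cap K$; consequently $(L\cap K)/D$ is a $p'$-group whereas $M/D$ is a $p$-group, and $(M/D)\cap\bigl((L\cap K)/D\bigr)=(M\cap L\cap K)/D=1$, so the two are coprime. The coprime case of Lemma \ref{over}, applied in $L/D$ to the subgroup $M/D$ and the normal subgroup $(L\cap K)/D$, then shows that the image of $M/D$ in $(L/D)\big/((L\cap K)/D)$ satisfies the partial $\Pi$-property. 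Finally the canonical isomorphisms $(L/D)\big/((L\cap K)/D)\cong L/(L\cap K)\cong LK/K$ carry this image onto $MK/K$, giving that $MK/K$ satisfies the partial $\Pi$-property in $LK/K=N_{G/K}(PK/K)$, which is exactly what we want.

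The single delicate point is that $K$ — equivalently $L\cap K$ — is in general neither contained in $M$ nor coprime to $M$, so Lemma \ref{over} cannot be invoked in one shot. Stripping off the $p$-part $D=P\cap K$ first, which lies inside $M$, and leaving behind the coprime $p'$-part $(L\cap K)/D$ is precisely the device that makes both cases of Lemma \ref{over} usable in succession; the rest is routine bookkeeping with the isomorphism theorems.
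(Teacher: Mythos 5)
Your proof is correct, but it takes a genuinely different route from the paper's. The paper argues directly on a chief series $1=U_0<U_1<\cdots<U_n=U=N_G(P)$ witnessing the partial $\Pi$-property for the maximal subgroup $P_1=P\cap L$ of $P$: it pushes this series down to $UK/K$ and checks factor by factor that $|\overline{U}:N_{\overline{U}}(\overline{P_1U_{i-1}}\cap\overline{U_i})|$ is a $p$-number, using that $U$ is $p$-soluble (it has the normal Sylow $p$-subgroup $P$), so each $U_i/U_{i-1}$ is a $p$-group or a $p'$-group, and that in the $p$-case the subgroup $P_1U_{i-1}\cap U_i$ is normal in $U$ and hence equals $U_{i-1}$ or $U_i$. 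You instead never touch the chief series: you strip off $D=P\cap K$ (normal in $N_G(P)$ and contained in the maximal subgroup $M$) via the containment case of Lemma \ref{over}, observe that what remains of $K$ inside $N_G(P)/D$, namely $(L\cap K)/D$, is a normal $p'$-group because $D\in\mathrm{Syl}_p(K)$, and strip that off via the coprime case of Lemma \ref{over}; the isomorphism theorems then identify the resulting pair with $\bigl(N_{G/K}(PK/K),\,MK/K\bigr)$. Your decomposition is cleaner and more conceptual --- it isolates exactly why neither case of Lemma \ref{over} applies in one shot and shows that the two cases compose --- at the cost of an implicit (but harmless) appeal to the fact that the partial $\Pi$-property is invariant under group isomorphisms. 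Both arguments rest on the same external inputs, namely Lemma \ref{over} (or its proof idea) and Lemma \ref{NG}(1).
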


    \begin{proof}
    	Write $ U=N_{G}(P) $.  Let $ L/K $ be a maximal subgroup of $ PK/K $. Then $ L = K(L \cap P) $ and $ P \cap L $ is a maximal subgroup of $ P $. Set $ P_{1} = P \cap L $.  By hypothesis, there
    	exists a chief series $$  1 =U_{0} < U_{1} < \cdot\cdot\cdot < U_{n}=U=N_{G}(P) $$ of $U$ such that for every chief factor $ U_{i}/U_{i-1} $ $ (1\leq i\leq n) $, $ | U  : N _{U} (P_{1}U_{i-1}\cap U_{i})| $ is a $ p $-number. It is easy
    	to see that $$  1 \leq U_{1}K/K \leq \cdot\cdot\cdot \leq U_{n}K/K=UK/K=N_{G}(P)K/K $$ is a normal series of $ UK/K $. Write $ \overline{G}=G/K $. By Lemma \ref{NG}, $ \overline{N_{G}(P)}=N_{\overline{G}}(\overline{P}) $. It suffices to show that  $ |\overline{N_{G}(P)}:N_{\overline{N_{G}(P)}}(\overline{P_{1}U_{i-1}}\cap \overline{U_{i}})| $ is a $ p $-number. Since $ U $ is $ p $-soluble, we have that  $ U_{i} /U_{i-1} $ is a $ p $-group or $ p' $-group.   If $ U_{i} /U_{i-1} $ is a $ p' $-group, then we are done. Now suppose that $ U_{i}/U_{i-1} $ is a $ p $-group.  Notice that $ |U:N_{U}(P_{1}U_{i-1}\cap U_{i})| $ is a $ p $-number. It follows that $ P_{1}U_{i-1}\cap U_{i}\unlhd U $, and thus either $ P_{1}U_{i-1}\cap U_{i}=U_{i-1} $ or $ P_{1}U_{i-1}\cap U_{i}=U_{i} $. If  $ P_{1}U_{i-1}\cap U_{i}=U_{i} $, then $ \overline{P_{1}U_{i-1}} \cap \overline{U_{i}} =\overline{U_{i}} $, and so $ |\overline{N_{G}(P)}:N_{\overline{N_{G}(P)}}(\overline{P_{1}U_{i-1}}\cap \overline{U_{i}})|=1 $ is a $ p $-number, as desired. If  $ P_{1}U_{i-1}\cap U_{i}=U_{i-1} $, then $ P_{1}U_{i-1}/U_{i-1}\cap U_{i}/U_{i-1}=1 $. Since $ U_{i}/U_{i-1} $ is a $ p $-group, we have that  $ P_{1}U_{i-1}/U_{i-1} $ is a maximal subgroup of $ PU_{i-1}/U_{i-1} $, and thus $ |U_{i}/U_{i-1}|=p $. Hence $ \overline{U_{i}}/\overline{U_{i-1}}\cong U_{i}K/U_{i-1}K $ has order at most $ p $. So $ \overline{P_{1}U_{i-1}}/\overline{U_{i-1}}\cap \overline{U_{i}}/\overline{U_{i-1}}=\overline{U_{i}}/\overline{U_{i-1}} $ or $ 1 $.   In any case, we have that $ |\overline{N_{G}(P)}:N_{\overline{N_{G}(P)}}(\overline{P_{1}U_{i-1}}\cap \overline{U_{i}})|=1 $, as desired. This completes the proof of the lemma.
    \end{proof}

\begin{lemma}\label{satisfies}
	 Let $ H $ be a normal subgroup of $ G $ and let $ P $ be a
	Sylow $ p $-subgroup of $ H $. If $ K $ is a normal $ p' $-subgroup of $ G $ and every maximal
	subgroup of $ P $ satisfies the partial $ \Pi $-property in $ N_{G}(P) $, then  every maximal subgroup of $ PK/K $ satisfies the partial $ \Pi $-property in $ N_{G/K}(PK/K) $.
\end{lemma}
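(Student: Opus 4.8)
The plan is to deduce this directly from the coprime branch of Lemma \ref{over}, in contrast with Lemma \ref{also}, where the general normal subgroup $ K $ forced the more delicate chief-factor analysis. The whole simplification comes from the hypothesis that $ K $ is a $ p' $-group: this supplies a coprimality that makes Lemma \ref{over} immediately applicable, so $ p $-solubility of $ N_{G}(P) $ (used in Lemma \ref{also}) is not needed here.

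First I would fix a maximal subgroup $ L/K $ of $ PK/K $. Since $ P $ is a $ p $-group and $ K $ is a $ p' $-group, $ P\cap K=1 $, so $ PK/K\cong P $ and the Dedekind law gives $ L=K(P\cap L) $ with $ P_{1}:=P\cap L $ a maximal subgroup of $ P $; in particular $ L/K=P_{1}K/K $. By hypothesis, $ P_{1} $ satisfies the partial $ \Pi $-property in $ N_{G}(P) $. Because $ (|K|,p)=1 $, Lemma \ref{NG}(2) identifies the relevant normalizer as $ N_{G/K}(PK/K)=N_{G}(P)K/K $. Hence it suffices to show that $ P_{1}K/K $ satisfies the partial $ \Pi $-property in $ N_{G}(P)K/K $.

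Next I would set $ U=N_{G}(P) $ and $ K_{0}=U\cap K $. Since $ K\unlhd G $, we have $ K_{0}\unlhd U $, and $ K_{0} $ is a $ p' $-group. As $ P_{1}\leq P\leq U $ is a $ p $-group, $ (|P_{1}|,|K_{0}|)=1 $, and $ P_{1} $ satisfies the partial $ \Pi $-property in $ U $. Therefore the coprime case of Lemma \ref{over}, applied inside $ U $ with the normal subgroup $ K_{0} $, yields that $ P_{1}K_{0}/K_{0} $ satisfies the partial $ \Pi $-property in $ U/K_{0} $.

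Finally I would transport this conclusion through the second isomorphism theorem. The canonical isomorphism $ U/(U\cap K)\cong UK/K $, $ u(U\cap K)\mapsto uK $, carries $ P_{1}K_{0}/K_{0}=P_{1}(U\cap K)/(U\cap K) $ onto $ P_{1}K/K=L/K $, and the partial $ \Pi $-property is invariant under such an isomorphism. Thus $ L/K $ satisfies the partial $ \Pi $-property in $ UK/K=N_{G/K}(PK/K) $, and since $ L/K $ was an arbitrary maximal subgroup of $ PK/K $, the lemma follows. I do not anticipate a genuine obstacle: the result is essentially a corollary of Lemmas \ref{over} and \ref{NG} once the $ p' $-hypothesis on $ K $ is used. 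The only points requiring care are quotienting by $ K_{0}=U\cap K $ rather than by $ K $ itself (which need not be contained in $ U $), and verifying that the second-isomorphism-theorem identification sends $ P_{1}K_{0}/K_{0} $ precisely onto $ L/K $.
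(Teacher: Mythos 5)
Your argument is correct, and it takes a genuinely different route from the paper's. The paper's proof is computational: it takes the chief series $1=U_{0}<U_{1}<\cdots<U_{n}=U=N_{G}(P)$ witnessing the partial $\Pi$-property of $P_{1}$ in $U$, pushes it into $UK/K$, and counts the index $|P_{1}U_{i}K:P_{1}U_{i-1}K|$ in two ways (once through $P_{1}$, once through $K$) to force $P_{1}\cap U_{i}K=P_{1}\cap U_{i-1}K$, hence $P_{1}U_{i-1}K\cap U_{i}K=U_{i-1}K$, so that every relevant normalizer index is literally $1$. You instead observe that $K_{0}=U\cap K$ is a normal $p'$-subgroup of $U$ coprime to $P_{1}$, apply the coprime branch of Lemma \ref{over} inside $U$ to get the property for $P_{1}K_{0}/K_{0}$ in $U/K_{0}$, and transport it along the second-isomorphism identification $U/K_{0}\cong UK/K$, which indeed carries $P_{1}K_{0}/K_{0}$ onto $P_{1}K/K=L/K$; combined with Lemma \ref{NG}(2), which identifies $N_{G/K}(PK/K)$ with $N_{G}(P)K/K$, this is exactly the conclusion. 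Both routes are sound, and you are right that neither needs the $p$-solubility used in Lemma \ref{also}. What the paper's computation buys is the explicit stronger fact that in the transported series each intersection $\overline{P_{1}U_{i-1}}\cap\overline{U_{i}}$ collapses to $\overline{U_{i-1}}$; what yours buys is economy, reducing the lemma to Lemmas \ref{over} and \ref{NG}(2) plus the one fact you should state explicitly (it is easily checked from the definition, but it is doing real work): the partial $\Pi$-property is an isomorphism invariant of the pair consisting of the subgroup and the ambient group.
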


\begin{proof}
	 Write $ \overline{G}=G/K $ and $ U=N_{G}(P) $. Clearly, $ N_{\overline{G}}(\overline{P}) =\overline{N_{G}(P)} $.   Let $ P_{1} $ be a maximal subgroup of $ P $ and $ U_{i} /U_{i-1} $ $ (1 \leq i \leq n) $ a chief factor of $ N_{G}(P) $ as in the proof of Lemma \ref{also}. It is easy
	 to see that $$  1 \leq \overline{U_{1}} \leq \cdot\cdot\cdot \leq \overline{U_{n}}=\overline{U}=\overline{N_{G}(P)} $$ is a normal series of $ UK/K $.
	
	 Now we consider the index $ |P_{1}U_{i}K : P_{1}U_{i-1}K| $. On the one hand, we have $ |P_{1}U_{i}K : P_{1}U_{i-1}K| = |U_{i}/U_{i-1}||P_{1} \cap U_{i-1}K : P_{1} \cap U_{i}K| $. On the other hand, $ |P_{1}U_{i}K : P_{1}U_{i-1}K| =
	 |U_{i} /U_{i-1}||K\cap P_{1}U_{i-1} : K\cap P_{1} U_{i} | $.  Therefore $ |P_{1} \cap U_{i}K : P_{1} \cap U_{i-1}K| = |K\cap P_{1} U_{i} : K\cap P_{1} U_{i-1} | $.
	 Observe that $ |P_{1}\cap  U_{i}K : P_{1} \cap U_{i-1}K | $ is a $ p $-number and $ |K \cap P_{1} U_{i} : K \cap P_{1}U_{i-1} | $ is a
	 $ p' $-number. It follows that $ |P_{1}\cap  U_{i}K : P_{1} \cap U_{i-1}K| = 1 $, and thus $ P_{1} \cap U_{i}K =P_{1}\cap U_{i-1} K $. Furthermore,  $ P_{1}U_{i-1}K \cap U_{i}K =P_{1}U_{i-1}K\cap U_{i-1} K=U_{i-1}K $. Hence $ |\overline{N_{G}(P)}: N_{\overline{N_{G}(P)}} (\overline{P_{1}U_{i-1}}\cap \overline{U_{i}})| = 1 $. This means
	 that $ \overline{P_{1}} $ satisfies the partial $ \Pi $-property in $ N_{\overline{G}}(\overline{P}) $.
\end{proof}

\begin{lemma}[{\cite[Lemma 2.6]{GuoX-2000}}]\label{product}
	 Let $ N $ be a soluble normal subgroup of $ G $ with $ N \not = 1 $. If every minimal normal subgroup of $ G $ which is contained in $ N $ is not contained in $ \Phi(G) $, then the Fitting subgroup $ F(N) $ of $ N $ is the direct product of minimal normal subgroups of $ G $ which are contained in $ N $.
\end{lemma}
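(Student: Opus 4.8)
The plan is to pin down $F(N)$ as the product of the minimal normal subgroups of $G$ lying in $N$ by exploiting the Frattini hypothesis, in three moves: first show $F(N)$ meets $\Phi(G)$ trivially, then upgrade this to abelianness of $F(N)$, and finally run a completely reducible argument. Throughout write $F=F(N)$. I would begin with the trivial structural remarks: since $N$ is soluble and $N\neq 1$, a minimal normal subgroup of $N$ is abelian, hence nilpotent, hence contained in $F$, so $F\neq 1$; and since $F$ is characteristic in $N\unlhd G$, we have $F\unlhd G$.

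The first real step is the observation that $F\cap\Phi(G)=1$. Indeed $F\cap\Phi(G)\unlhd G$, so were it nontrivial it would contain a minimal normal subgroup $L$ of $G$; but then $L\le F\le N$ and $L\le\Phi(G)$ exhibit a minimal normal subgroup of $G$ that is contained in $N$ and in $\Phi(G)$, contrary to hypothesis. The second step upgrades this to abelianness. By the theorem of Gasch\"{u}tz that $\Phi(K)\le\Phi(G)$ whenever $K\unlhd G$, applied to $K=F$, we obtain $\Phi(F)\le F\cap\Phi(G)=1$. Since $F$ is nilpotent we have $F'\le\Phi(F)=1$, so $F$ is abelian; in fact a nilpotent group with trivial Frattini subgroup is a direct product of elementary abelian groups.

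The third step is the completely reducible argument, which I would isolate as the following claim: an abelian normal subgroup $A\unlhd G$ with $A\cap\Phi(G)=1$ is a direct product of minimal normal subgroups of $G$. Applied to $A=F$ this finishes the proof, because the minimal normal subgroups produced all lie inside $F\le N$. I would prove the claim by induction on $|A|$: choose a minimal normal subgroup $L$ of $G$ with $L\le A$; as $L\cap\Phi(G)\le A\cap\Phi(G)=1$ and $L\neq 1$, some maximal subgroup $M$ fails to contain $L$, whence $G=LM$. Because $A$ is abelian and normal, both $L\cap M$ and $A\cap M$ are normalized by $A$ and by $M$, hence by $G=AM=LM$; minimality forces $L\cap M=1$, and the modular law yields $A=L\times(A\cap M)$. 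Since $(A\cap M)\cap\Phi(G)=1$ and $|A\cap M|<|A|$, induction writes $A\cap M$ as a direct product of minimal normal subgroups of $G$, and therefore so is $A$.

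Finally I would reconcile this with the precise wording. Since $N$ is soluble, \emph{every} minimal normal subgroup of $G$ contained in $N$ is abelian, hence nilpotent and normal in $N$, hence contained in $F$; so the minimal normal subgroups of $G$ appearing in the decomposition of $F$ are exactly all the minimal normal subgroups of $G$ contained in $N$, and $F$ is their direct product, as required. The main obstacle is the completely reducible claim in the third step, which carries the real content; by contrast the conceptual key is the very short identity $F\cap\Phi(G)=1$ together with Gasch\"{u}tz's inequality, which jointly force $F$ to be abelian and then complemented enough to split into minimal normal subgroups.
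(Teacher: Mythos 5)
Your proof is correct, and it is worth noting at the outset that the paper itself gives no argument for this lemma: it is imported verbatim as Lemma 2.6 of Li and Guo, so the only comparison available is with the standard literature proof of this classical fact, which proceeds exactly as you do (it is in essence Gasch\"utz's theorem that $F(G)$ splits over $\Phi(G)$ as a direct product of minimal normal subgroups, localized to a normal subgroup $N$). Your three moves are all sound: $F(N)\cap\Phi(G)=1$ follows from the hypothesis because a nontrivial normal intersection would contain a minimal normal subgroup of $G$ inside both $N$ and $\Phi(G)$; Gasch\"utz's inequality $\Phi(K)\le\Phi(G)$ for $K\unlhd G$ then kills $\Phi(F(N))$ and hence $F(N)'$; and the inductive splitting $A=L\times(A\cap M)$ via a maximal subgroup $M$ not containing $L$ is carried out correctly, including the verification that $L\cap M$ and $A\cap M$ are normal in $G=AM$ (using that $A$ is abelian). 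The only blemish is in your closing paragraph: the assertion that the factors appearing in your decomposition are \emph{exactly all} the minimal normal subgroups of $G$ contained in $N$ is false in general (already $G=C_p\times C_p$ has $p+1$ minimal normal subgroups but is a direct product of only two of them), and it is also not what the lemma claims or what the paper uses --- in the proof of Theorem 1.6 the decomposition $P=T_1\times\cdots\times T_s$ is explicitly into ``some'' minimal normal subgroups. Since every factor you produce lies in $F(N)\le N$, the statement as intended is already established before that last remark, so the overstatement is harmless but should be deleted.
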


\begin{lemma}[{\cite[Kapitel IV, Satz 4.7]{Huppert-1967}}]\label{normal}
	Let $ P $ be a Sylow $ p $-subgroup of $ G $ and $ N \unlhd G $. If
	$ P \cap N \leq \Phi(P) $, then $ N $ is $ p $-nilpotent.
\end{lemma}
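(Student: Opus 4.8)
The plan is to prove the statement by a minimal-counterexample argument that reduces everything to the structure of a single minimal normal subgroup, isolating the one genuinely hard configuration. Throughout write $P_0 = P \cap N$; since $N \unlhd G$ this is a Sylow $p$-subgroup of $N$ with $P_0 \unlhd P$, and the hypothesis reads $P_0 \le \Phi(P)$. The first tool I would record is the elementary fact that \emph{a normal $p$-subgroup $Q$ of $G$ with $Q \le \Phi(P)$ satisfies $Q \le \Phi(G)$}: if not, a maximal subgroup $U$ with $G = QU$ gives $P = Q(P \cap U)$ by Dedekind's law, and since $Q$ consists of nongenerators of $P$ this forces $P = P\cap U \le U$, whence $Q \le U$, a contradiction. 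This lemma is what converts the Frattini hypothesis on $P$ into a Frattini hypothesis on $G$ whenever a relevant $p$-subgroup happens to be normal in $G$.

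Next I would set up the induction. Let $(G,N)$ be a counterexample with $|G|$ minimal. If $R := O_{p'}(N) \neq 1$, then $R \unlhd G$, and since $R$ is a $p'$-group we have $PR/R \cong P$ with $\Phi(PR/R) = \Phi(P)R/R$ and $(PR/R)\cap(N/R) = P_0R/R \le \Phi(P)R/R$; minimality makes $N/R$ $p$-nilpotent, and pulling back the normal $p$-complement across the $p'$-group $R$ makes $N$ $p$-nilpotent, a contradiction. Hence $O_{p'}(N) = 1$, and now $N$ being $p$-nilpotent is \emph{equivalent} to $N$ being a $p$-group (its normal $p$-complement would be $O_{p'}(N)=1$). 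So it suffices to prove $N$ is a $p$-group. I would then choose a minimal normal subgroup $L$ of $G$ contained in $N$; as $O_{p'}(N)=1$, $L$ cannot be a $p'$-group, so $p \mid |L|$.

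The decisive split is on whether $L$ is abelian. If $L$ is an elementary abelian $p$-group, then $L \le P_0 \le \Phi(P)$, so $L \le \Phi(G)$ by the lemma. Passing to $G/L$ (valid because $L \le \Phi(P)$ gives $\Phi(P/L) = \Phi(P)/L$), I would first check $O_{p'}(N/L) = 1$: if $\overline{R} = O_{p'}(N/L)$ had preimage $R$, then $R = L \rtimes K$ with $K$ a $p'$-Hall subgroup of $R$, and the Frattini argument applied to $R \unlhd G$ together with $L \le \Phi(G)$ gives $G = R\,N_G(K) = L\,N_G(K) = N_G(K)$, so $K \unlhd G$, forcing $K \le O_{p'}(N) = 1$; thus $O_{p'}(N/L)=1$. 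By minimality $N/L$ is $p$-nilpotent, hence a $p$-group, and therefore $N$ is a $p$-group — contradiction. This disposes of the abelian (and, inductively, the whole soluble) situation by entirely elementary means, so in a minimal counterexample $L$ must be nonabelian.

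The main obstacle is precisely this remaining case, where $L$ is \emph{nonabelian}: then $P \cap L \in \mathrm{Syl}_p(L)$ is a nontrivial $p$-subgroup sitting inside $\Phi(P)$, and the lemma above does not apply because $P \cap L$ need not be normal in $G$. To rule this out I would bring in the transfer homomorphism $V\colon G \to P/\Phi(P)$ into the (elementary abelian) Frattini quotient. Using that $N \unlhd G$, the standard description of $V$ via coset representatives expresses $V(n)$, for $n \in N$, as a product of elements of the form $x\,n^{m}\,x^{-1}$ that lie in $P$; by normality of $N$ each such element lies in $P \cap N = P_0$, so $V(N) \le P_0\Phi(P)/\Phi(P) = 1$, i.e. $N \le \ker V$. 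Converting this vanishing of the transfer, \emph{together with} the containment $P_0 \le \Phi(P)$, into $p$-nilpotency of $N$ is the content of Tate's transfer theorem, and this fusion/transfer step is where the real work lies: it is what forbids a nonabelian $L$ whose Sylow $p$-subgroup would otherwise hide inside $\Phi(P)$, thereby completing the minimal-counterexample argument. I expect this Tate-type step to be the crux; the reductions above are routine, but the passage from ``trivial transfer plus Frattini containment'' to ``$p$-nilpotent'' genuinely requires the transfer machinery rather than elementary normal-subgroup manipulation.
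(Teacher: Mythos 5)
The paper does not actually prove this lemma: the bracketed citation \emph{is} the proof --- Huppert, Kapitel IV, Satz 4.7 is precisely Tate's theorem (``Satz von Tate''), stated verbatim as the lemma. Your proposal mounts a minimal-counterexample reduction and then, at the decisive point, invokes ``Tate's transfer theorem'' for the passage from $N \le \ker V$ together with $P \cap N \le \Phi(P)$ to $p$-nilpotency of $N$. But that invoked statement is the lemma itself. Indeed, your own transfer computation shows that $N \le \ker V$ is an \emph{automatic} consequence of the hypothesis $P \cap N \le \Phi(P)$ (each factor $x n^m x^{-1}$ lies in $P \cap N$ by normality of $N$), so the reformulation ``vanishing transfer plus Frattini containment'' has exactly the same content as the original hypothesis; citing Tate there is citing the statement being proved. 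Judged as a self-contained proof this is a genuine gap: the entire nonabelian case, which you correctly identify as the crux, is assumed rather than proved, and nothing in your argument would exclude a nonabelian minimal normal subgroup $L$ with $1 \ne P \cap L \le \Phi(P)$ without carrying out the actual transfer/fusion work (Thompson-style transfer argument as in Huppert's proof, or Tate's original cohomological argument).

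Your reductions are, taken on their own, correct and cleanly executed: the nongenerator argument showing that a normal $p$-subgroup of $G$ contained in $\Phi(P)$ lies in $\Phi(G)$; the passage to $G/O_{p'}(N)$ with $\Phi(PR/R)=\Phi(P)R/R$ and the Dedekind computation $(PR/R)\cap(N/R)=(P\cap N)R/R$; the observation that once $O_{p'}(N)=1$, $p$-nilpotency of $N$ is equivalent to $N$ being a $p$-group; and the disposal of an abelian minimal normal subgroup via $L \le \Phi(G)$, Schur--Zassenhaus and the Frattini argument. But these reductions buy nothing here: since the theorem you cite at the end already covers the general case, they are superfluous if citation is permitted, and insufficient if it is not. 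Given that the paper itself treats the lemma as an imported classical result, the appropriate ``proof'' is simply the citation to Huppert; if you want an actual proof, the work you deferred is the whole theorem.
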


\begin{lemma}[{\cite[Lemma 2.16]{skiba2007weakly}}]\label{in}
	 Let $ \FF $ be a saturated formation containing the class of all
	supersoluble groups. Let $ E $ be a normal subgroup of $ G $ such that $ G/E\in \FF $. If $ E $ is cyclic, then $ G\in \FF $.
\end{lemma}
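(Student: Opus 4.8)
The plan is to prove the statement via the local (Gasch\"utz--Lubeseder--Schmid) description of saturated formations, reducing it to a chief-factor centrality check that is immediate because $ E $ is cyclic. By the Gasch\"utz--Lubeseder--Schmid theorem, since $ \FF $ is saturated we may write $ \FF = LF(f) $ for its canonical local definition $ f $ (an integrated, full formation function), so that a group $ X $ lies in $ \FF $ precisely when $ X/C_X(H/K) \in f(p) $ for every prime $ p $ and every $ p $-chief factor $ H/K $ of $ X $; call such a chief factor $ \FF $-central. Likewise $ \UU = LF(f_{\UU}) $, where $ f_{\UU}(p) $ is the class of abelian groups of exponent dividing $ p-1 $. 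The hypothesis $ \UU \subseteq \FF $, together with the correspondence between inclusion of saturated formations and inclusion of their canonical local functions, gives $ f_{\UU}(p) \subseteq f(p) $ for every prime $ p $. With this in hand it suffices to show that every chief factor of $ G $ is $ \FF $-central.

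To do this I would fix a chief series of $ G $ passing through $ E $ (legitimate by the Jordan--H\"older theorem, and since $ \FF $-centrality of the chief factors is independent of the chosen series it is enough to verify it for this one). Consider first a chief factor $ H/K $ with $ K < H \le E $. As a section of the cyclic group $ E $ it is cyclic, and being a chief factor it is cyclic of prime order $ p $; hence $ \operatorname{Aut}(H/K) \cong \mathbb{Z}_{p-1} $, and the faithful action of $ G/C_G(H/K) $ on $ H/K $ embeds this quotient into $ \mathbb{Z}_{p-1} $. Thus $ G/C_G(H/K) $ is abelian of exponent dividing $ p-1 $, so it lies in $ f_{\UU}(p) \subseteq f(p) $, and $ H/K $ is $ \FF $-central. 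Now consider a chief factor $ H/K $ with $ E \le K $. Here $ C_{G/E}(H/K) = C_G(H/K)/E $, whence $ (G/E)/C_{G/E}(H/K) \cong G/C_G(H/K) $; since $ G/E \in \FF $ this quotient lies in $ f(p) $, so $ H/K $ is $ \FF $-central as well. Every chief factor of $ G $ is therefore $ \FF $-central, and the local characterization yields $ G \in \FF $.

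An alternative, more self-contained route avoids the full machinery by inducting on $ |E| $: choose a minimal normal subgroup $ N \le E $ of $ G $, which is cyclic of prime order $ p $ because $ E $ is cyclic; since $ E/N $ is cyclic and normal in $ G/N $ with $ (G/N)/(E/N) \cong G/E \in \FF $, induction gives $ G/N \in \FF $. If $ N \le \Phi(G) $ then $ G/\Phi(G) $ is a quotient of $ G/N \in \FF $, so $ G/\Phi(G) \in \FF $ and saturation forces $ G \in \FF $; if $ N \not\le \Phi(G) $ one splits $ G = N \rtimes M $ with $ M \cong G/N \in \FF $ and checks that the single new chief factor $ N $ is $ \FF $-central exactly as above, concluding $ G \in \FF $.

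The main obstacle I expect is the formation-theoretic bookkeeping rather than any genuine group-theoretic difficulty: one must invoke the correct form of the local definition --- integrated and full --- so that both the translation ``$ X \in \FF \iff $ every chief factor of $ X $ is $ \FF $-central'' is valid and the inclusion $ \UU \subseteq \FF $ really does descend to $ f_{\UU}(p) \subseteq f(p) $ at the level of local functions. Getting these canonical-definition hypotheses right, and confirming that $ \FF $-centrality depends only on the isomorphism type of the chief factor together with the induced action (so that a single chief series through $ E $ suffices), are the points that need care; once they are settled, the cyclicity of $ E $ makes the $ \FF $-centrality of the factors below $ E $ automatic.
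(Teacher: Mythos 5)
Your proof is correct, but note that the paper does not prove this lemma at all --- it imports it verbatim from Skiba \cite[Lemma 2.16]{skiba2007weakly} --- so you have supplied an argument where the authors supply only a citation. Your main route via the Gasch\"utz--Lubeseder--Schmid theorem is sound: fixing a chief series through $E$ is legitimate because $\FF$-centrality of a chief factor depends only on its $G$-isomorphism type (Jordan--H\"older for groups with operators), the factors below $E$ are cyclic of prime order $p$ so that $G/C_G(H/K)$ embeds in $\mathrm{Aut}(H/K)\cong \mathbb{Z}_{p-1}$, and for the factors above $E$ the centralizer correspondence $C_{G/E}\bigl((H/E)/(K/E)\bigr)=C_G(H/K)/E$ is valid since $E\le K\le C_G(H/K)$. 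One small imprecision, which you yourself flag: $f_{\UU}(p)=$ abelian groups of exponent dividing $p-1$ is the standard integrated but not \emph{full} local definition of $\UU$; the canonical one is $\mathcal{S}_p f_{\UU}(p)$, and it is between canonical definitions that $\UU\subseteq\FF$ forces containment of local functions. Since $f_{\UU}(p)$ is contained in its canonical closure, your conclusion $f_{\UU}(p)\subseteq f(p)$ survives, but a referee would want this said exactly as you do in your final paragraph. Your alternative induction on $|E|$ (minimal normal $N\le E$ of order $p$, then either $N\le\Phi(G)$ and saturation finishes, or $G=N\rtimes M$ and the one new chief factor is handled as above) is the more economical, essentially textbook argument and is closer in spirit to what Skiba's cited lemma rests on; either version would serve as a complete substitute for the citation.
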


\begin{lemma}\label{p-nilpotent}
	 Let $ H $ be a $ p $-subgroup of $ G $. If $ G $ is $ p $-nilpotent, then $ H $ satisfies the partial $ \Pi $-property in  $ G $.
\end{lemma}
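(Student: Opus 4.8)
The plan is to exploit the fact that a $p$-nilpotent group has a normal $p$-complement and to build a chief series that respects it. First I would let $K$ be the normal Hall $p'$-subgroup of $G$ guaranteed by $p$-nilpotency, so that $K \unlhd G$ is a $p'$-group and $G/K$ is a $p$-group. Then I would choose a chief series $\varGamma_{G}: 1 = G_{0} < G_{1} < \cdots < G_{m} = K < G_{m+1} < \cdots < G_{n} = G$ of $G$ refining the normal series $1 \leq K \leq G$. This is the series I would use to verify the partial $\Pi$-property, since the definition only requires the existence of \emph{one} suitable chief series.

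Next I would treat the two kinds of chief factors separately. For a factor $G_{i}/G_{i-1}$ lying inside $K$ (that is, $i \leq m$), it is a $p'$-group, whereas $HG_{i-1}/G_{i-1}$ is an epimorphic image of the $p$-group $H$ and hence a $p$-group; consequently $HG_{i-1}/G_{i-1} \cap G_{i}/G_{i-1} = 1$. For a factor above $K$ (that is, $i > m$), it corresponds under the correspondence theorem to a chief factor of the $p$-group $G/K$; since $p$-groups are nilpotent, every such chief factor has order $p$, so the intersection $HG_{i-1}/G_{i-1} \cap G_{i}/G_{i-1}$, being a subgroup of a group of order $p$, is either trivial or equal to all of $G_{i}/G_{i-1}$.

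The decisive observation is then that in every case the intersection $HG_{i-1}/G_{i-1} \cap G_{i}/G_{i-1}$ is normal in $G/G_{i-1}$: it is either the trivial subgroup or the full chief factor $G_{i}/G_{i-1}$, which is normal in $G/G_{i-1}$ because $G_{i}$ and $G_{i-1}$ are normal in $G$. Hence $N_{G/G_{i-1}}(HG_{i-1}/G_{i-1} \cap G_{i}/G_{i-1}) = G/G_{i-1}$, so the relevant index equals $1$, and $1$ is trivially a $\pi(HG_{i-1}/G_{i-1} \cap G_{i}/G_{i-1})$-number. As this holds for every factor of $\varGamma_{G}$, the subgroup $H$ satisfies the partial $\Pi$-property in $G$.

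I do not expect a serious obstacle here: the only genuine step is choosing the chief series through the normal $p$-complement, after which the normality of each intersection forces the index to be $1$ automatically. The single point that deserves a line of care is the claim that the chief factors above $K$ have order $p$, which I would justify from the nilpotency of the $p$-group $G/K$ via the correspondence between chief factors of $G$ lying above $K$ and chief factors of $G/K$.
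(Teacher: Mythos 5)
Your proof is correct and follows essentially the same route as the paper's: in both arguments the key point is that each intersection $HG_{i-1}/G_{i-1}\cap G_{i}/G_{i-1}$ is either trivial (when the factor is a $p'$-group) or the whole chief factor (when the factor has order $p$), hence normal, so the relevant index is $1$. The only cosmetic difference is that you fix a chief series through the normal $p$-complement to justify the dichotomy on chief factors, while the paper asserts it for an arbitrary chief factor of a $p$-nilpotent group; your version is, if anything, slightly more carefully justified.
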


\begin{proof}
	Let $ A/B $ be an arbitrary chief factor of $ G $. Then $ |A/B| $ is a $ p' $-number or $ |A/B| = p $.
	If $ |A/B| $ is a $ p' $-number, then $ (H \cap A)B/B = 1 $. If $ |A/B| = p $, then $ (H \cap A)A/B = A/B $ or $ 1 $.
	In any case, we have that $ |G/B : N_{G/B}((H \cap A)B/B)| = 1 $. Therefore $ H $ satisfies the partial $ \Pi $-property in $ G $, as wanted.
\end{proof}

  %\begin{lemma}[{\cite[Lemma 6]{Ballester-2011}}]\label{component}
  	 %Let $ G $ be a group such that $ O_{p'}(G) = 1 $ for some prime $ p $. Suppose that $ E (G) $ is non-trivial and $ F(G) = {\rm Soc}(G) $. For every component $ H $ of $ G $, we have:
  	
  	 %\vskip0.08in
  	
  	 %\noindent{\rm (1)} $ p $ divides the order of $ H/Z(H) $, and
  	
  	 %\noindent{\rm (2)} the Sylow p-subgroups of $ H/Z(H) $ are non-cyclic.
  %\end{lemma}

  %\begin{lemma}\label{isomorphic}
  	%Let $ P $ be a Sylow $ p $-subgroup of $ G $ with $ |P|=p^{3} $. Suppose that $ G $ is not  $ p $-soluble   and  every $ 2 $-maximal subgroups of $ P $ satisfies  the partial $ \Pi $-property in $ G $. Then $ p = 2 $ and $ P $ is  isomorphic to $ Q_{8} $.
  %\end{lemma}

  %\begin{lemma}[{\cite[Kapitel VI, Satz 14.3]{Huppert-1967}}]\label{identity}
  	%Suppose that $ G $ has an abelian Sylow $ p $-subgroup $ P $. Then $ G' \cap Z(G)\cap P=1 $.
  %\end{lemma}

  %\begin{lemma}[{\cite[Theorem 5.18]{isaacs2008finite}}]\label{P-abelian}
  	%Let $ P $ be an abelian Sylow $ p $-subgroup of $ G $. Then $ G' \cap P\cap Z(N_{G}(P))=1 $.
  %\end{lemma}

  \section{Proofs}

  %\hspace{0.5cm} In order to prove Theorem \ref{2-mini} and Theorem \ref{2-maxi}, we shall prove a series of results at first.

  %\begin{theorem}\label{2-minimal}
  	%Let $ G $ be a $ p $-soluble group and $ P $ a Sylow $ p $-subgroup of $ G $ and let $ d $ be a power of $ p $ such that $ 1 < d < |P| $. Assume that every  subgroup of  $ P $ of order $ d $ satisfies the partial $ \Pi $-property in $ G $, and  every cyclic subgroup of $ P $ of order $ 4 $ (when $ d=2 $ and $ P $ is not quaternion-free) satisfies the partial $ \Pi $-property in $ G $.  Then $ G $ has  $ p $-length  at most $ 1 $.
  %\end{theorem}

  \begin{proof}[Proof of Theorem \ref{first}]
  	By Lemma \ref{p-nilpotent}, we only need to prove the sufficiency. Assume that
  	$ G $ is not $p$-nilpotent and $ G $ is a counterexample of minimal order. We divide the proof into the following steps.
  	
  	\vskip0.1in
  	
  	\noindent\textbf{Step 1.} $ O_{p'}(G) = 1 $.
  	
  	\vskip0.1in
  	
  	Assume that $ O_{p'}(G) > 1 $. Set $ \overline{G} = G/O_{p'}(G) $. By Lemmas \ref{over} and \ref{NG}, $ (\overline{P})' = \overline{P'} $ satisfies
  	the partial $ \Pi $-property in $ N_{\overline{G}}(\overline{P}) = \overline{N_{G}(P)} $. Furthermore, every maximal subgroup of $ P $
  	satisfies the partial $ \Pi $-property in $ G $. Hence $ \overline{G} $ satisfies the hypotheses of the theorem. The
  	minimal choice  of $ G $ yields that $ \overline{G} $ is $ p $-nilpotent, and thus $ G $ is $ p $-nilpotent, a contradiction.
  	
  	\vskip0.1in
  	
  	\noindent\textbf{Step 2.}  If $ H $ is a proper subgroup of $ G $ with $ P \leq H $, then $ H $ is $ p $-nilpotent. In particular,
  	$ N_{G}(P) $ is $ p $-nilpotent.
  	
  	\vskip0.1in
  	
  	Clearly, $ N_{H}(P) \leq N_{G}(P) $. By Lemma \ref{subgroup}, every maximal subgroup of $ P $ satisfies the partial $ \Pi $-property in $ N_{H}(P) $ and $ P' $ satisfies the partial $ \Pi $-property in $ H $. This means that
  	$ H $ satisfies the hypotheses of the theorem. The minimal choice of $ G $ implies that $ H $ is $ p $-nilpotent. If $ N_{G}(P) = G $, then $ G $ is $ p $-supersoluble according to Theorem \ref{max}. By \cite[Chapter 2, Lemma 5.25]{Guo2015}, $ G $ is $ p $-nilpotent, a contradiction. Therefore, $ N_{G}(P) < G $, and so $ N_{G}(P) $ is $ p $-nilpotent.
  	
  	\vskip0.1in
  	
  	\noindent\textbf{Step 3.} $ O_{p}(G) > 1 $ and there exists a minimal normal subgroup $ K $ of $ G $ such that $ G/K $ is
  	$ p $-nilpotent. Moreover, $ K \leq O_{p}(G) $ and $ G $ is $ p $-soluble..
  	
  	\vskip0.1in
  	
  	If $ P'=1 $, then $ P $ is abelian. Since $ N_{G}(P) $ is $ p $-nilpotent, we have that $ P \leq Z(N_{G}(P)) $. By
  	Burnside's Theorem (see \cite[Theorem 5.13]{isaacs2008finite}), it follows that $ G $ is $ p $-nilpotent, a contradiction. Thus $ P' > 1 $. By the hypothesis, there exists a chief series
  	\begin{equation}\label{1}
  		\varGamma_{G}: 1 =G_{0} < G_{1} < \cdot\cdot\cdot < G_{n}= G
  	\end{equation}
  	of $ G $ such that for every $ G $-chief factor $ G_{i}/G_{i-1}$ $ (1\leq i\leq n) $ of $ \varGamma_{G} $, $ | G  : N _{G} (P'G_{i-1}\cap G_{i})| $ is a $ p $-number. Set $ G_{1} = K $. Hence $ |G : N_{G}(P' \cap K)| $ is a $ p $-number, and so $ P' \cap K \unlhd G $. We
  	claim that $ K \leq O_{p}(G) $. If $ P' \cap K = K $, then $ K \leq O_{p}(G) $, as wanted. So we can assume that $ P' \cap K = 1 $. Furthermore, $ P \cap K $ is abelian. It turns out that $ P \cap K $ is abelian. If
  	$ N_{G}(P \cap K) = G $, then $ P \cap K $ is normal in $ G $. By Step 1 and the minimal normality of $ K $, we have that $ K \leq P $, and therefore $ K\leq O_{p}(G) $, as wanted. Hence we may assume that
  	$ N_{G}(P \cap K) $ is a proper subgroup of $ G $. Since $ P \leq N_{G}(P \cap K) $, we have that $ N_{G}(P \cap K) $ is $ p $-nilpotent by Step 2, and so $ N_{K}(P\cap K) $ is $ p $-nilpotent. It follows that $ P\cap K \leq Z(N_{K}(P \cap K)) $. Applying Burnside's Theorem (see \cite[Theorem 5.13]{isaacs2008finite}), we know that $ K $ is $ p $-nilpotent. In
  	view of Step 1, $ K\leq O_{p}(G) $, as claimed.
  	
  	According to (\ref{1}), we can see that the following series $$ 1 < G_{2}/K = G_{2}/G_{1} < \cdots < G_{n}/G_{1} = G/G_{1} $$
  	is a chief series of $ G/G_{1} $. For every $ G $-chief factor $ G_{i}/G_{i-1} $  $ (2 \leq i \leq n) $, $ |G : N_{G}(P'G_{i-1}\cap G_{i})| $
  	is a $ p $-number. Thus $ P'G_{i-1} \cap G_{i} \unlhd G $.  Observe that $ G_{i-1}\leq P'G_{i-1} \cap G_{i} \leq G_{i} $. If $ G_{i-1} =
  	P'G_{i-1} \cap G_{i} $, then $ G_{i-1}K/K = P'G_{i-1}K/K \cap G_{i}K/K $. If $  G_{i}= P'G_{i-1} \cap G_{i} $, then $ G_{i}K/K =
  	P'G_{i-1}K/K \cap G_{i}K/K $. In any case, $ |G/K : N_{G/K}(P'G_{i-1}K/K \cap G_{i}K/K)| = 1 $. This means that $ P'K/K $ satisfies the partial $ \Pi $-property in $ G/K $. Applying Lemma \ref{over}, every maximal subgroup of $ P/K $ satisfies the partial $ \Pi $-property in $ N_{G}(P)/K = N_{G/K}(P/K) $. Hence $ G/K $ satisfies the hypotheses of the theorem. The minimal choice  of $ G $ implies that $ G/K $ is $ p $-nilpotent. Therefore $ G $ is $ p $-soluble, as desired.

  	%In particular, there exists a  normal subgroup $ U $ of $ G $ contained in $ L $ such that  $ U = VK $ for some Hall $ 2' $-subgroup $ V $ of $ U $, and all complements for $ K $ in $ U $ are conjugate  by \cite[Theorem 3.12]{isaacs2008finite}. The Frattini argument yields that $ G=UN_{G}(V)=KN_{G}(V)=N_{G}(V) $  and $  V\leq O_{2'}(G) = 1 $, which is impossible.
  	
  	%If $ K $ is a Sylow $ 2 $-subgroup of $ L $, then $ L\in \FF $. Hence we can suppose that $ K $ is a  proper subgroup of every Sylow $ 2 $-subgroup of $ L $. Let $ S $ be a Sylow $ 2 $-subgroup of $ L $. Then $ |S| \geq d $. If $ |S|>d $, then by Lemma \ref{subgroup}, $ L $ satisfies the hypotheses of the theorem. The minimal choice of $ G $ yields $ L\in \FF $. Suppose that $ |S| = d $. By Lemma \ref{conjugate}, we may assume that $ S\leq P $. Suppose that $ F(G) = O_{2}(G) $ is not contained in $ L $. Then $ G = O_{2}(G)L  $. By \cite[Chap. A, Theorem 10.6]{Doerk-Hawkes}, there exists a chief factor $ N/K $ of $ G $ such that $ N \leq O_{2}(G) $ and  $ G = LN $.

  	\vskip0.1in
  	
  	\noindent\textbf{Step 4.}  There exists a Sylow $ q $-subgroup $ Q $ of $ G $ with $ q \not = p $ such that $ G = PQ $.
  	
  	\vskip0.1in
  	
  	Since $ G $ is $ p $-soluble, there exits a Sylow $ q $-subgroup $ Q $ of $ G $ such that $ PQ = QP $ for any
  	prime $ q \not = p $ by \cite[Chapter 6, Theorem 3.5]{Gorenstein}. If $ PQ < G $, then $ PQ $ is $ p $-nilpotent by Step 2.
  	In view of \cite[Theorem 3.21]{isaacs2008finite}, we have $ Q \leq C_{G}(O_{p}(G)) \leq O_{p}(G) $, a contradiction
  	
  	\vskip0.1in
  	
  	\noindent\textbf{Step 5.} $ N_{G}(P) = P $ is a maximal subgroup of $ G $.
  	
  	\vskip0.1in
  	
  	Let $ L $ be a maximal subgroup of $ G $ such that $ N_{G}(P) \leq L $. By Step 2, $ L $ is $ p $-nilpotent
  	and therefore $ O_{p'}(L) \leq C_{G}(O_{p}(G)) \leq O_{p}(G) $. Hence  $ N_{G}(P) = P=L $ is a maximal subgroup of $ G $.

  	%$ \langle x \rangle  $ satisfies the partial $ \Pi $-property in $ G $. Thus  $ |G:N_{G}(\langle x \rangle \Phi(G^{\FF})\cap G^{\FF})|=|G:N_{G}( H\Phi(G^{\FF}))| $ is a $ p $-number. This implies that $ H\Phi(G)\unlhd G $, a contradiction. Hence we may assume that $ o(x)=p $. Let $ T $ be a minimal normal subgroup of
  	
  	\vskip0.1in
  	
  	\noindent\textbf{Step 6.} $ QK/K $ is a minimal normal subgroup of $ G/K $ and $ Q $ is an elementary abelian group.
  	
  	\vskip0.1in
  	
  	Notice that $ G/K $ is $ p $-nilpotent and $G=PQ$. Let $ T/K $ be a minimal normal subgroup of $ G/K $ contained in $ QK/K $. By  Step 5, we conclude that $G=PT$ and $ T=QK $. Thus Step 6 follows.
  	
  	\vskip0.1in
  	
  	\noindent\textbf{Step 7.} $ P' \cap K = 1 $ and $ K \leq Z(P) $.
  	
  	\vskip0.1in
  	
  	For the $ G $-chief factor $ K/1 $, $ |G : N_{G}(P' \cap K)| $ is a $ p $-number, and so $ P' \cap K \unlhd G $. The
  	minimal normality  of $ K $ implies that $ P' \cap K = K $ or $ 1 $. If $ P'\cap K = K $, then $ P\cap KQ = K \leq P' \leq \Phi(P) $,
  	and thus $ KQ $ is $ p $-nilpotent by Lemma \ref{normal}. Thus $ G $ is $ p $-nilpotent, a contradiction.  Therefore $ K \cap P' = 1 $. Since $ [P, K] \leq [P,P] = P' $, we have that $ [P, K] \leq P' \cap K = 1 $, as desired.
  	
  	\vskip0.1in
  	
  	\noindent\textbf{Step 8.} $ Q $ has no fixed points on $ K $ and $ |Q| = q $.
  	
  	\vskip0.1in
  	
  	If there exists a non-identity element $ a $ in $ K $ and a non-identity element $ b $ in $ Q $ such that
  	$ [a, b] = 1 $, then $ \langle P, b\rangle  \leq C_{G}(a) $. By Step 5, we know that $ G = \langle P, b\rangle = C_{G}(a) $. The
  	minimal normality of $ K $ yields that $ K = \langle a \rangle $. In this case, $ KQ = K \times Q $, and hence $ G $ is $p$-nilpotent, a contradiction. Thus $ Q $ has no fixed points on $ K $. By \cite[Theorem 8.3.2]{Kurzweil}, $ Q $ is a cyclic group of order $ q $.
  	
  	\vskip0.1in
  	
  	\noindent\textbf{Step 9.} The final contradiction.
  	
  	\vskip0.1in
  	
  	Since $ KQ $ is normal in $ G $, we have $ G = KN_{G}(Q) $ by the Frattini argument. As $ K $ is
  	elementary abelian, we see that $ K\cap N_{G}(Q) $ is normal in $ G $. It follows that $ K\cap N_{G}(Q) = 1 $ or
  	$ K $. If $ K \cap N_{G}(Q) = K $, then $ Q $ is a normal $ p $-complement in $ G $, a contradiction. Therefore,
  	$ K \cap N_{G}(Q) = 1 $. Let $ S \in \mathrm{Syl}_{p}(N_{G}(Q)) $ such that $ S \leq P $. By Step 7, we have $ S \leq C_{G}(K) $.
  	Since $ S $ normalizes $ Q $ and $ C_{G}(K) \unlhd G $, it follows from Step 8 that $ [S, Q] \leq C_{G}(K) \cap  Q = 1 $.
  	Thus $ S \leq  C_{G}(Q) $. By Step 4, we deduce that $ N_{G}(Q) = SQ = C_{G}(Q) $. By Burnside's
  	Theorem (see \cite[Theorem 5.13]{isaacs2008finite}), we see that $ G $ is $ q $-nilpotent. It forces that $ G = N_{G}(P) $,
  	which contradicts Step 5. This final contradiction completes the proof.
  \end{proof}

    \begin{proof}[Proof of Theorem \ref{second}]
    	We argue by induction on $ |G| $. By Lemma \ref{subgroup}, every maximal subgroup of $ P $ satisfies the partial $ \Pi $-property in $ N_{N}(P) $, and $ P' $ satisfies the partial $ \Pi $-property in $ N $. By Theorem \ref{first}, $ N $ is $ p $-nilpotent.  Let $ L $ be a normal $ p $-complement in $ N $. Then $ L \unlhd G $ and $ (G/L) \big / (N/L) $ is $ p $-nilpotent. Applying Lemmas \ref{over} and \ref{satisfies}, we know  that $ G/L $ satisfies the hypotheses of the theorem.
    	
    	If $ L > 1 $, then $ G/L $ is $ p $-nilpotent by induction and so $ G $ is $ p $-nilpotent. Hence we may
    	assume that $ L = 1 $ and therefore $ N = P $. Let $ T/N $ be a normal $ p $-complement
    	in $ G/N $. By the  Schur-Zassenhaus theorem, there exists  a complement $ T_{1} $ of $ N $ in $ T $, and $ T = T_{1}N $.
    	It is clear that $ T $ satisfies the hypotheses of Theorem \ref{first}. Therefore $ T = T_{1}\times  N $. Hence $ T_{1} $ is a normal $ p $-complement in $ G $. This completes the proof.
    \end{proof}

    Recall that a group $ G $ is a Sylow tower group of supersoluble type if, for $ p_{1} > p_{2} > \cdots > p_{s} $ are the distinct prime divisors of the order of $ G $, there exists a series of normal subgroups of $ G $, $$ 1 = G_{0} \leq G_{1} \leq \cdots \leq G_{s} = G $$ such that $ G_{i} /G_{i-1} $ is a Sylow $ p_{i} $-subgroup of $ G_{i}/G_{i-1} $ for $ i = 1, ... , s $.

  %\begin{theorem}
  	 %Let $ P $ be a Sylow $ p $-subgroup of $ G $, and let $ d $ be a power of $ p $ such that $ 1 < d < |P| $. Assume that every  subgroup of  $ P $ of order $ d $ satisfies the partial $ \Pi $-property in $ G $, and  every cyclic subgroup of $ P $ of order $ 4 $ (when $ d=2 $ and $ P $ is not quaternion-free) satisfies the partial $ \Pi $-property in $ G $.  Then $ G $ is a $ p $-soluble group.
  %\end{theorem}

\begin{corollary}\label{co}
	For every Sylow subgroup $ P $ of a group $ G $, suppose that every maximal
	subgroup of $ P $ satisfies the partial $ \Pi $-property in $ N_{G}(P) $ and $ P' $ satisfies the partial $ \Pi $-property in $ G $. Then $ G $ is a Sylow tower group of supersoluble type.
\end{corollary}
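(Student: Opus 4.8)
The plan is to induct on $|G|$, peeling off the smallest prime at the top of the tower. If $|G|$ is a prime power there is nothing to prove, so I would assume at least two primes divide $|G|$, let $p$ be the smallest of them, and pick $P \in \mathrm{Syl}_p(G)$. Because $p$ is smallest, every prime divisor of $p-1$ is smaller than $p$ and hence does not divide $|G|$, so $(|G|, p-1) = 1$. The corollary's hypotheses applied to $P$ are precisely the hypotheses of Theorem \ref{first} for this particular $p$, so Theorem \ref{first} yields that $G$ is $p$-nilpotent. Let $H$ be the normal $p$-complement; then $H \unlhd G$, the quotient $G/H$ is a $p$-group, $|H|$ is coprime to $p$, and $1 < |H| < |G|$.

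Next I would check that $H$ inherits the hypotheses, so that the inductive hypothesis applies to it. For a prime $q \mid |H|$ (necessarily $q \neq p$) and $Q \in \mathrm{Syl}_q(H)$, the fact that $G/H$ is a $p$-group forces $Q$ to be a Sylow $q$-subgroup of $G$ as well; thus the corollary's hypotheses hold for $Q$ inside $G$. Since $N_H(Q) = H \cap N_G(Q) \leq N_G(Q)$, and both $Q'$ and each maximal subgroup of $Q$ are $q$-subgroups, Lemma \ref{subgroup} transfers these properties from $N_G(Q)$ down to $N_H(Q)$ and from $G$ down to $H$. Hence $H$ satisfies the hypotheses of the corollary, and by induction $H$ is a Sylow tower group of supersoluble type.

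Finally I would reassemble a tower for $G$. Writing the prime divisors of $|G|$ as $p_1 > \cdots > p_{s-1} > p_s = p$ (so that $p_1, \dots, p_{s-1}$ are exactly the primes dividing $|H|$), the tower of $H$ furnishes a normal series $1 = H_0 \leq H_1 \leq \cdots \leq H_{s-1} = H$ with $H_i/H_{i-1}$ a Sylow $p_i$-subgroup of $H/H_{i-1}$. Each $H_i$ is characteristic in $H$ (it is the preimage in $H$ of the unique normal Sylow $p_i$-subgroup of $H/H_{i-1}$), hence $H_i \unlhd G$. Setting $G_i = H_i$ for $0 \leq i \leq s-1$ and $G_s = G$ gives a normal series of $G$; since $G/H$ is a $p$-group and $p_i \neq p$ for $i \leq s-1$, each factor $G_i/G_{i-1}$ is still a Sylow $p_i$-subgroup of $G/G_{i-1}$, while the top factor $G_s/G_{s-1} = G/H$ is a Sylow $p$-subgroup. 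This exhibits $G$ as a Sylow tower group of supersoluble type and closes the induction.

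I expect the only genuinely delicate point to be the inheritance step: ensuring that the partial $\Pi$-property conditions, which are stated relative to $N_G(Q)$ and to $G$, descend correctly to $N_H(Q)$ and to $H$. This is exactly where Lemma \ref{subgroup} is needed, together with the observation that the Sylow $q$-subgroups of the normal $p$-complement $H$ are already Sylow in $G$; the remaining work is routine bookkeeping with characteristic subgroups and coprime indices.
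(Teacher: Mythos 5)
Your proposal is correct and follows essentially the same route as the paper: take the smallest prime $p$, note $(|G|,p-1)=1$, apply Theorem \ref{first} to get $p$-nilpotence, and then induct on the normal $p$-complement, using Lemma \ref{subgroup} (and the fact that Sylow subgroups of the complement are Sylow in $G$) to verify the inheritance of the hypotheses. The paper's proof simply states this inheritance as clear; your write-up fills in that detail and the final tower reassembly explicitly.
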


\begin{proof}
	Let $ p $ be the smallest prime dividing $ |G| $ and $ P $ a Sylow $ p $-subgroup of $ G $. By Theorem \ref{first}, $ G $ is $ p $-nilpotent. Let $ L $ be a normal $ p $-complement of $ G $. It is clear that $ L $ satisfies  the
	hypotheses of $ G $,  and so $ L $ is a Sylow tower group of supersoluble type by induction.  This shows that $ G $ is a Sylow tower group of supersoluble type.
\end{proof}

  \begin{proof}[Proof of Theorem \ref{third}]
  	Assume that the theorem is false and let $ G $ be a counterexample of
  	minimal order.
  	
  	In view of Lemma \ref{subgroup} and Corollary \ref{co}, $ N $ is a Sylow tower group of supersoluble type.
  	Let $ p $ be the largest prime dividing the order of $ N $ and let $ P $ be a Sylow $ p $-subgroup of $ N $. Then $ P\unlhd G $ and $ G/P \big / N/P\cong G/N\in \FF $.  By Lemmas \ref{over}, \ref{NG} and \ref{satisfies}, we see that $ G/P $ satisfies the hypotheses of the theorem. The minimal choice  of $ G $ yields that $ G/P \in \FF $.

    Assume that $ \Phi(P) > 1 $. Clearly, $ \Phi(P) \unlhd G $ and $ G/\Phi(P) \big / P/\Phi(P) \cong G/P $. By Lemmas \ref{over}, \ref{NG} and \ref{satisfies}, we see that $ G/\Phi(P) $
    satisfies the hypotheses of our theorem . Thus $ G/\Phi(P) \in \FF $ by the minimal choice of $ G $. Then $ G/\Phi(G) \cong G/\Phi(P) \big / \Phi(G)/\Phi(P) \in \FF  $ and so $ G\in \FF $, a contradiction. Hence $ \Phi(P) = 1 $.
  	
  	Let $ T $ be a minimal normal subgroup of $ G $ contained in $ P $. By Lemmas \ref{over}, \ref{NG} and \ref{satisfies},  $ G/T $ satisfies the	hypotheses of the theorem.  The minimal choice  of $ G $ yields that $ G/T \in \FF $. Note that $ T \nleq  \Phi(G) $. By Lemma \ref{product}, we may assume that
  	$$ P = T_{1} \times T_{2} \times \cdots \times T_{s} ,  $$ where $ T_{1} ,..., T_{s}  $ are some minimal normal subgroups of $ G $. By the above argument, $ G/T_{j}\in \FF $
  	for all $ j \in \{ 1, ... , s \} $. If $ s > 1 $, then $ G\cong G/(T_{1}\cap T_{2})\in \FF $, a contradiction. Hence $ s=1 $ and $ P = T_{1} $ is a minimal normal subgroup of $ G $.
  	
  	Let $ S $ be a Sylow $ p $-subgroup of $ G $. Choose a maximal subgroup $ P_{1} $ of $ P $ such that $ P_{1} \unlhd S $.
  	By the hypothesis, there exists a chief series
  	
  	$$ \varGamma_{G}: 1 =G_{0} < G_{1} < \cdot\cdot\cdot < G_{n}= G $$
  	of $ G = N_{G}(P) $ such that $ |G : N_{G}(P_{1}G_{i-1} \cap G_{i})| $ is a $ p $-number for every $ G $-chief factor $ G_{i}/G_{i-1} $ $(1\leq i\leq n) $. There exists an integer $ k $ such that $ P\leq  G_{k}P_{1}  $, but $ P\nleq G_{k-1}P_{1} $,
  	where $ 1 \leq k \leq n $. Notice that $ |G : N_{G}(P_{1}G_{k-1} \cap G_{k})| $ is a $ p $-number. Since $ P_{1}\unlhd S $, we
  	have $ P_{1} G_{k-1} \cap G_{k} \unlhd  G $. If $ P_{1}G_{k-1} \cap  G_{k}= G_{k} $, then $ P \leq  G_{k}P_{1}=G_{k-1}P_{1}  $, a contradiction.
  	Therefore, $ P_{1}G_{k-1} \cap G_{k}= G_{k-1} $, and so $ P_{1}\cap G_{k}= P_{1} \cap G_{k-1} $. Observe that $ P = P \cap G_{k}P_{1} =
  	(P \cap  G_{k})P_{1} $. The minimal normality of $ P $ implies that $ P \cap G_{k}= P $. Furthermore, $ P_{1} =
  	P_{1} \cap G_{k}= P_{1} \cap  G_{k-1} $. Since $ P \nleq G_{k-1}P_{1} $, we have that $ P \nleq  G_{k-1} $. Thus $ P \cap G_{k-1}= P_{1} $.
  	The minimal normality of $ P $ yields that $ P_{1}= 1 $ and $ |P| = p $. In view of Lemma \ref{in}, $ G\in \FF $, a contradiction. Our proof is now complete.
  	\end{proof}

  	 %\noindent\textbf{Step 8.} $ K = C_{G}(T) $, for every minimal normal subgroup $ T $ of $ G $.
  	
  	 %\vskip0.1in

  	 %Let $ T $ be a minimal normal subgroup of $ G $. Then $ T $ is contained in $ K $ by Step 4. Since $ K = F(G) $, it follows from \cite[Chap. A, Theorem 10.6(b)]{Doerk-Hawkes} that  $ K\leq C_{G}(T) $. Hence either $  C_{G}(T) = K $ or $ T\leq Z(G) $. By Step 3, $ T $ has  order $ d $. Hence   $ C_{G}(T) = K $.
  	
  	 %\vskip0.1in
  	
  	 %\noindent\textbf{Step 9.} The final contradiction.
  	
  	 %\vskip0.1in
  	
  	 %Let $ L $ be a maximal subgroup of $ G $. Then $ T\leq  K \leq L $.  Let $ L_{p} $ be a Sylow $ p $-subgroup of $ L $.  If $ |L_{p}|=d $, then $ T=L_{p} $, and so $ L $ is $ p $-closed. If $ |L_{p}| > d $, then  $ L $ is $ p $-soluble by Step 2. Applying Theorem \ref{second},
  	 %$ L $ has $ p $-length at most $ 1 $. Note that $ O_{p'}(L) \leq C_{G}(T) = K $.  Hence $ O_{p'}(L) \leq O_{p'}(K) \leq O_{p'}(G) = 1 $ by Step 1. It follows that $ L $ is $ p $-closed.
  	
  	%Every maximal subgroup of $ G $ is $ p $-closed. Since $ G $ is not $ p $-closed, we can
  	 %apply \cite[Theorem 3.1]{Kappe}  to conclude that $ G $ is a non-abelian simple group, which contradicts Step  3. This final contradiction completes the proof.
  	
  	 %\vskip0.1in

   %\begin{proof}[Proof of Theorem \ref{first}]
   	%Combining Theorem  \ref{second} and Theorem \ref{third}, we conclude that $ G $ is  $ p $-soluble with   $ p $-length  at most $ 1 $.
   %\end{proof}

\section*{Acknowledgments}

    \hspace{0.5cm} This work is supported by the National Natural Science Foundation of China (Grant No.12071376, 11971391) and  the  Fundamental Research Funds for the Central Universities (No. XDJK2020B052).

   \small

\end{document}